\titlespacing*{\subsection}{0pt}{0.9\baselineskip}{0.6\baselineskip}
\let\bbbbl\Biggl
\let\bbbbr\Biggr
\newcommand{\mcl}[1]{\mathcal{ #1}}
\newcommand{\mbf}[1]{\mathbf{ #1}}
\newcommand{\norm}[1]{\left\Vert #1\right\Vert}
\newcommand{\hinf}{\ensuremath{H_{\infty}}}
\newcommand{\ip}[2]{\left\langle{#1},{#2}\right\rangle}
\newcommand{\half}{\frac{1}{2}}
\newcommand{\bmat}[1]{\begin{bmatrix} #1\end{bmatrix}}
\newcommand{\mat}[1]{\begin{matrix}#1\end{matrix}}
\newcommand{\R}{\mathbb{R}}
\newcommand{\C}{\mathbb{C}}
\newcommand{\N}{\mathbb{N}}
\newcommand{\Z}{\mathbb{Z}}
\newtheorem{thm}{Theorem}
\newtheorem{defn}[thm]{Definition}
\newtheorem{lem}[thm]{Lemma}
\newtheorem{cor}[thm]{Corollary}
\newtheorem{rem}{Remark}
\newtheorem{ex}[thm]{\textbf{Example}}
\newcommand{\PI}{\pmb{\Pi}}
\newcommand{\pie}{\scalebox{0.9}[1.2]{$\mathit{\Pi}$}}
\newcommand{\fourpi}[4]{\hspace{0.5mm}\pie\hspace{-1.mm}\left[\footnotesize\begin{array}{c|c}
#1&#2\\\hline #3 & \{#4\}
\end{array}\right]}
\newcommand{\threepi}[1]{\pie_{\{#1\}}}
\newenvironment{breakablealgorithm}
  {
  \begin{center}
     \refstepcounter{algorithm}
     \hrule height.8pt depth0pt \kern2pt
     \renewcommand{\caption}[2][\relax]{
       {\raggedright\textbf{\fname@algorithm~\thealgorithm} ##2\par}%
       \ifx\relax##1\relax 
         \addcontentsline{loa}{algorithm}{\protect\numberline{\thealgorithm}##2}%
       \else 
         \addcontentsline{loa}{algorithm}{\protect\numberline{\thealgorithm}##1}%
         \fi
         \kern2pt\hrule\kern2pt
         }
  }{
     \kern2pt\hrule\relax
     \end{center}
     }
     \title{
     Dual Representations and $\hinf$-Optimal Control of Partial Differential Equations
     }
\author{Sachin~Shivakumar,~Amritam~Das,~and~Matthew~Peet,~\IEEEmembership{Senior Member,~IEEE}
\thanks{S. Shivakumar is with CNLS and T-5 Division, Los Alamos National Laboratory  (email: sshivakumar@lanl.gov).}%
\thanks{M. Peet is with School for Engineering of Matter, Transport and Energy, Arizona State University (email: mpeet@asu.edu).}
\thanks{A. Das is with Department of Electrical Engineering, Eindhoven University of Technology (email: am.das@tue.nl).}
\thanks{This work was supported by National Science Foundation under Grants No. 2337751, 2429973, CMMI-1935453 and CNS-1739990.}
\thanks{This work was partly supported by LDRD program of Los Alamos National Laboratory. LA-UR-25-31030}
}
\newcommand{\blue}[1]{\color{black} #1 \color{black}}
\begin{document}

\maketitle

\begin{abstract}                
We consider $\hinf$-optimal state-feedback control of the class of linear Partial Differential Equations (PDEs) which admit a Partial Integral Equation (PIE) representation. While linear matrix inequalities are commonly used for optimal control of Ordinary Differential Equations (ODEs), the absence of a universal state-space representation and suitable dual form prevents such methods from being applied to optimal control of PDEs. Specifically, for ODEs, the controller synthesis problem is defined in state-space, and duality is used to resolve the bilinearity of that synthesis problem. Recently, the PIE representation was proposed as a universal state-space representation for linear PDE systems. In this paper, we show that any PDE system represented by a PIE admits a dual PIE with identical stability and I/O properties. This result allows us to reformulate the stabilizing and optimal state-feedback control problems as convex optimization over the cone of positive Partial Integral (PI) operators. Operator inversion formulae then allow us to construct feedback gains for the original PDE system. The results are verified through application to several canonical problems in optimal control of PDEs and indicate the resulting bounds on $\hinf$ norm are not conservative.
\end{abstract}

\begin{IEEEkeywords}
Partial Differential Equations, Optimal Control, Robust Control, Linear Matrix Inequalities
\end{IEEEkeywords}

\section{Introduction}\label{sec:introduction}
Partial Differential Equations (PDEs) are used to model spatially-distributed phenomena such as vibrations in beams \cite{timoshenko}, turbulent fluid flows \cite{alfonsi2009reynolds}, and reaction kinetics \cite{chakraborty2005spatially,christofides2002nonlinear}. In many systems governed by PDEs, optimal control can significantly improve safety and reduce operational costs. For example: controllers designed for Euler or Timoshenko beam models can suppress seismic and wind disturbances in buildings and bridges~\cite{kannan1995active,ikeda2004active,fisco2011smart} (thereby reducing structural damage); controllers for fluid-flow models can reduce drag on aircraft wings \cite{quadrio2011drag} (thereby reducing fuel costs); and controllers for reaction-diffusion equations can improve homogeneity (or desired stratification) of concentration and temperature in chemical reactors~\cite{mao2017micro,chakraborty2005spatially} (thereby optimizing reaction rates). See~\cite{pesch2012optimal} for a survey on PDE models in optimal control.

Despite or perhaps because of the variety of PDE models, and unlike the state-space transfer function and Linear Matrix Inequality (LMI) methods developed for Ordinary Differential Equations (ODEs), approaches to control of PDEs tend to be ad hoc---applicable only to a specific PDE and set of boundary conditions. Furthermore, because of the limited scope of such existing methods for control of PDEs, and because of the extensive knowledge and expertise needed to extend such methods to application-based, multivariate, data-based PDEs, this traditional ad-hoc approach has not resulted in significant improvement in desired outcomes such as drag reduction, plasma stabilization, or reaction efficiency in the same way that state-space methods for ODEs have dramatically improved safety, manufacturing, autonomy, and efficiency. So what about state-space methods for ODEs enable their efficient utilization in complex, multivariate, data-based models? The answer lies in separation of model from method. Specifically, state-space allows the user to specify a system model in a suitably general form with the understanding that for any model of that form, there already exist efficient numerical algorithms, implemented in reliable software, which can provide simulation, performance analysis, and optimal controllers for that system. The extension of this approach to control of PDEs, then, requires both a sufficiently general notion of state-space for PDEs, as well as a class of numerical methods suitable to that representation. Fortunately, such a state-space representation now exists in the form of the recently proposed Partial Integral Equation (PIE) framework. The goal of this paper, then, is to design methods for controller synthesis which are model agnostic in that they apply to any suitably well-posed PDE system expressed in the PIE framework and they admit efficient numerical algorithms for software implementation.



While the problem of optimal feedback control of PDEs is underdeveloped \cite{troltzsch2024optimal}, efficient algorithms exist for robust and optimal feedback control of linear state-space Ordinary Differential Equations (ODEs), with such controllers typically obtained by solving either Riccati Equations \cite{locatelli2002optimal,gerdts2011optimal} or Linear Matrix Inequalities (LMIs) \cite{boyd1994linear}. Because efficient algorithms exist for controller synthesis of linear state-space ODE systems, the most common approach to control of PDEs is to approximate the PDE model with a lumped state-space ODE model using methods such as frequency-domain analysis and projection~\cite{bamieh2002distributed,apkarian2020boundary,collis2002analysis} or finite-difference~\cite{christofides1996feedback,ito1998optimal,ito1998reduced}. However, stability and performance gains of the closed-loop state-space lumped ODE do not necessarily translate to stability or performance of the optimal closed-loop PDE~\cite{hinze2011discretization,morris2010control} -- PDEs that have a finite number of unstable modes~\cite{prieur2018feedback} are an exception. The downsides of these \textit{early-lumping} methods are: closed-loop stability is not guaranteed; large discretized state-spaces increase computational cost; and implementation requires mapping measurements of the physical system to states of the ODE approximation.

To avoid reducing the PDE model to a linear state-space ODE, one can formulate the optimal control problem in an abstract operator-theoretic state-space framework~\cite{Curtain:1995:IIL:207416} to obtain an operator equivalent of the Riccati Equations for controller synthesis~\cite{lasiecka2000control,morris2001h}. Unfortunately, however, the operators in these Riccati equations are unbounded and cannot be easily parameterized. As a result, one needs to project the operator equations onto a finite-dimensional subspace \cite{aksikas2017optimal} -- implying that the solution is only valid on the projected subspace. This approach is often referred to as \textit{late-lumping} and has associated convergence proofs that show the error of approximation decreases with the increase in the number of bases for the projected subspace. The downsides of late-lumping are: the projection requires extensive ad hoc analysis for any given PDE; the operator solutions are never obtained explicitly (only their projection onto a finite-dimensional subspace); and the closed-loop is not guaranteed to be stable for any given order of projection.

Related to the Riccati-based \textit{late-lumping} approach is Backstepping, which uses a feedback controller to transform the closed-loop dynamics into a form that is equivalent (through an invertible state transformation) to that of a nominal stable system~\cite{krstic2008boundary,meurer2012control,vazquez2024backstepping}. The resulting state transformation is an integral operator whose kernel is defined by a set of PDEs that must be solved numerically and, in certain cases, convergence proofs are available. The downsides of Backstepping are: the kernel map must be re-derived for every PDE (recent work has focused on power series~\cite{vazquez_2023} or neural networks~\cite{bhan_2023} to find the kernel); a parameterization of the kernels is required in order to numerically solve the resulting kernel PDEs; and the controllers are stabilizing, not optimal.

To avoid lumping and numerical solution of kernels, recent work has focused on explicit parameterizations of positive Lyapunov functions, often using positive matrices and LMIs to enforce the positivity of these Lyapunov functions \cite{fridman2009lmi,gahlawat_2016ACC,magron2020optimal}. Since the resulting conditions for stability or performance of the controller are formulated in terms of LMIs, one can use efficient interior-point solvers to solve these LMIs to obtain provable properties of the PDE. The downsides of this approach are: the assumption of specific structure on the Lyapunov function and controller adds conservatism to the problem; the use of ad hoc steps such as Poincar\'e and Wirtinger inequalities to upper bound the derivative of the Lyapunov function; and the failure to resolve the bilinearity between the Lyapunov variable and the feedback gain variable often renders the problem non-convex or severely limits the structure of the Lyapunov function and/or controller.

We conclude, therefore, that existing methods for optimal feedback control of PDEs: lack provable properties, are ad hoc, or are conservative. Our goal, then, is to overcome some of these disadvantages by using a newly developed state-space representation of PDEs to obtain dual representations of the PDE. This dual representation is then used to propose a convex formulation of the $\hinf$-optimal state-feedback controller synthesis problem. This approach has advantages over prior work in that it applies to any suitably well-posed PDE, requires no ad hoc steps or manipulation, and has few obvious sources of conservatism.

Having stated our goal, let us now consider the Partial Integral Equation (PIE) representation of PDE optimal control problems. Specifically, a PIE has the following state-space form
\begin{equation*}
\bmat{\mcl T \dot{\mbf x}(t)\\z(t)\\y(t)}=\bmat{\mcl A&\mcl B_1&\mcl B_2\\\mcl C_1&D_{11}&D_{12}\\\mcl C_2&D_{21}&D_{22}}\bmat{\mbf x(t)\\w(t)\\u(t)}+\mcl B_{w}\dot w(t)+\mcl B_{u}\dot u(t),
\end{equation*}
where $u$ is the control input, $w$ is the exogenous disturbance, $y$ is the measured output, $z$ is the regulated output, and $\mbf x$ is the system state. The operators $\mcl{T}, \mcl{A}, \mcl{B}_i, \mcl{C}_i$ are all bounded integral operators and $D_{ij}$ are matrices. For any suitably well-posed PDE optimal control problem, these operators may be constructed from analytic formulae~\cite{peet_2021AUT} or software implementations such as PIETOOLS~\cite{toolbox:pietools} (See also Section~\ref{sec:PIEs}). The major difference between the PIE and PDE representations is that the state of the PIE is the highest-order spatial derivative of the PDE state (e.g., $\mbf x=\partial_s^2\mbf v$), which is related to the PDE state through an integral operator with polynomial kernel ($\mbf v=\mcl T\mbf x$). Since the state of the PIE is the highest-order spatial derivative of the PDE state, the PIE does not require boundary conditions. Instead, the boundary conditions are implicit in the map $\mcl T:\mbf x\mapsto \mbf v$, and their effect on the dynamics is made explicit in the operators $\mcl A$ and $\mcl B_i$.

The PIE representation retains many of the advantages of the state-space framework used for ODEs. To illustrate, consider stability of a state-space ODE (i.e. $\dot x=Ax$). For $x \in \R^n$, a necessary and sufficient condition for $V(x)$ to be a positive quadratic Lyapunov candidate is that it has the form $V(x)=x^TPx$ for some positive definite matrix $P>0$. More importantly,
square matrices form a linear algebra, and hence the derivative of such Lyapunov candidates are also square matrices, i.e., $A^TP+PA$ is itself a matrix and hence a necessary and sufficient condition for $\dot V(x)=x^T(A^TP+PA)x\le 0$ is that
$A^TP+PA\le 0$. Extending these concepts to PIEs, we find that some vector spaces of integral operators also form a linear algebra. Specifically, we define the class of bounded linear Partial Integral (PI) operators (denoted $\pie\in \PI_4$ -- See Section~\ref{sec:PIEs}) to be those of the form
\[
\left(\fourpi{P}{Q_1}{Q_2}{R_i}\bmat{x\\\mbf \Phi}\right)(s):= \bmat{
Px + \int_{-1}^{0} Q_1(s)\mbf \Phi(s)ds\\
Q_2(s)x +\left(\mcl R\mbf \Phi\right)(s)
}
\] where
{\small
\[
\left(\mcl{R}\mbf \Phi\right)(s)\hspace{-0.5mm}=\hspace{-0.5mm} R_0(s) \mbf \Phi(s) +\hspace{-0.5mm}\int\limits_{-1}^s R_1(s,\theta)\mbf \Phi(\theta)d \theta+\hspace{-0.5mm}\int\limits_s^0R_2(s,\theta)\mbf \Phi(\theta)d \theta.
\]} 
The square elements of this subspace $\PI_4$ form a linear composition algebra. As a result, many of the LMI methods used in the analysis and control of state-space ODEs can be generalized to optimization of positive PI operators. Specifically, if one considers a quadratic Lyapunov function for the PDE $V(\mbf v)=\ip{\mbf v}{\mcl P \mbf v}$ defined on a PDE state $\mbf v=\mcl T\mbf x$, where $\mcl P\succ 0$ is a PI operator, then if $u=w=0$, $\dot V=\ip{\mbf x}{(\mcl T^*\mcl P\mcl A+\mcl A^*\mcl P\mcl T)\mbf x}$ and hence negativity of the PI operator $\mcl T^*\mcl P\mcl A+\mcl A^*\mcl P\mcl T$ is necessary and sufficient for the stability of the PDE\footnote{Note that for integral operators on a Hilbert space, $^*$ represents the adjoint operator  -- i.e., $\ip{x}{\mcl A y} = \ip{\mcl A^*x}{y}$ for all $x,y$. For $\PI$ operators defined here, this Hilbert space is the Cartesian product $\R^m \times L_2^n$.}. Furthermore, the positivity of $\mcl P$ and negativity of $\mcl T^*\mcl P\mcl A+\mcl A^*\mcl P\mcl T$ can be verified using convex optimization solvers embedded in software packages such as PIETOOLS~\cite{toolbox:pietools}. This approach to a generalization of LMI methods to PIEs has previously been used to solve the problems of stability analysis, $L_2$-gain, and optimal estimator design for linear PDEs~\cite{peet2019discussion,shivakumar_2019CDC,das_2019CDC}.

Unlike analysis, however, controller synthesis is fundamentally a non-convex optimization problem, where the non-convexity arises because we are simultaneously searching for both a Lyapunov certificate of system properties and a controller that is being chosen to optimize those system properties. To understand what a dual system is and how it allows for convexification of the optimal control problem, let us first recall results developed for linear state-space ODEs. Specifically, for any state-space ODE system $G=\{A,B,C,D\}$ (Primal ODE\footnote{We use $\{A,B,C,D\}$ to refer to the state-space system with transfer function $C(sI-A)^{-1}B+D$.}), we may define a dual ODE state-space system $G_d=\{A^T,C^T,B^T,D^T\}$ (Dual ODE).
Duality theory establishes that the systems $G$ and $G_d$ have related properties -- e.g. controllability of $G$ implies observability of $G_d$ and vice versa. Furthermore, and of particular importance to optimal control, duality theory shows that $G$ and $G_d$ have identical stability and input-output properties -- e.g. $\norm{G}_{\mcl L(L_2)}=\norm{G_d}_{\mcl L(L_2)}$.

Equipped with this duality result, control of linear state-space ODEs is relatively simple. Specifically, if we want to design a controller $u(t)=Kx(t)$ then, by the KYP lemma applied to the primal closed-loop system, the $\hinf$-norm of the optimal closed-loop system is given by the smallest $\gamma$ such that
\[
\bmat{P(A+BK)+(A+BK)^TP&PB & (C+DK)^T\\B^TP &-\gamma I&D^T\\C+DK&D&-\gamma I}\le 0
\]
for some $K$ and $P>0$ -- a condition which is \textit{bilinear} in $K$ and $P$. However, if we apply the KYP lemma to the \textit{dual} closed-loop system, and define the new variable $Z=KP$, then the optimal closed-loop $\hinf$-norm is given by the smallest $\gamma$  such that
\[
\bmat{(AP+BZ)+(AP+BZ)^T&(CP+DZ)^T & B\\CP+DZ &-\gamma I&D\\B^T&D^T&-\gamma I}\le 0
\]
for some $P>0$ and $Z$ -- a condition which is \textit{linear} in $P$ and $Z$, and where the optimal state-feedback controller gain is recovered as $K=ZP^{-1}$. 

We conclude, therefore, that if we want to solve the problem of optimal control of PIEs (and hence PDEs), we need to be able to construct the dual representation of a PIE and show that this dual representation retains the stability and performance properties of the primal.
\footnote{While dual representations of PDE systems have been studied in the context of the semigroup framework~\cite{delfour1972controllability,pommaret1995duality}, these methods require both extensive ad hoc mathematical analysis and, when applied to controller synthesis, late lumping of the resulting operator equations~\cite{Curtain:1995:IIL:207416}.}
Specifically, we will show that, for a given \textit{primal} PIE (defined by $\mcl G= \{\mcl T,\mcl A,\mcl B,\mcl C,\mcl D\}$\footnote{Analogous to the ODE notation for systems, we use $\{\mcl T,\mcl A,\mcl B,\mcl C,\mcl D\}$ to refer to a system of partial integral equations of the form $\partial_t(\mcl T\mbf x)(t) = \mcl A\mbf x(t)+\mcl Bu(t)$ with output $z(t) = \mcl C\mbf x(t)+\mcl D u(t)$.}, we may define the \textit{dual PIE} as $\mcl G_d=\{\mcl T^*,\mcl A^*,\mcl C^*,\mcl B^*,\mcl D^*\}$ (a simple construction of the adjoint operator is given in Eq.~\eqref{eqn:adjoint}).
Then, in Theorem~\ref{thm:dual_stable}, we show that stability of $\mcl G$ implies stability of $\mcl G_d$ and vice-versa. Furthermore, in Theorem~\ref{thm:dual_hinfty}, we show equivalence of induced norms: $\norm{\mcl G}_{\mcl L(L_2)}=\norm{\mcl G_d}_{\mcl L(L_2)}$\footnote{The Banach space of bounded linear operators on $L_2[0,\infty]$ is denoted $\mcl L(L_2)$ and for ODEs is equivalent to the $H_\infty$ system norm.}.

Equipped with these duality results (Thms.~\ref{thm:dual_stable} and~\ref{thm:dual_hinfty}), generalization of the LMI for $\hinf$-optimal state-feedback control of PIEs becomes relatively simple -- the resulting optimization problem is formulated in Theorem~\ref{cor:optimal_control_lpi} and can be implemented using the PIETOOLS Matlab toolbox~\cite{toolbox:pietools}. We also note that, for actuation at the boundary, the controller synthesis conditions given here require filtering -- See Subsec.~\ref{subsec:note_boundary}.

\section{Notation}\label{sec:notation}
$\R$, $\R^+$ and $\R^n$ denote real numbers, positive real numbers, and n-tuples of real numbers. $L_2^n[a,b]$ denotes the set of $\R^n$-valued, Lebesgue square-integrable equivalence class of functions on spatial domain $[a,b]\subset \R$. 
$\R L_2^{m,n}[a,b]$ denotes the Cartesian product space $\R^{m}\times L_2^{n}[a,b]$  
with inner-product
\begin{align*}
\ip{\bmat{x_1\\\mbf x_2}}{\bmat{y_1\\\mbf y_2}}_{\R L_2} := x_1^Ty_1 + \ip{\mbf x_2}{\mbf y_2}_{L_2}.
\end{align*}
When clear from context, we occasionally omit the subscript on the inner product. (i.e. $\ip{\cdot}{\cdot}:=\ip{\cdot}{\cdot}_{\R L_2}$). Since the only domain considered here is $[a,b]$, we typically omit the domain and simply write $L_2^n$ or $\R L_2^{m,n}$ -- further omitting the dimensions when clear from context. For normed space $X$ and Banach space $Y$, $\mcl L(X,Y)$ denotes the Banach space of bounded linear operators from $X$ to $Y$ with induced norm $\norm{\mcl P}_{\mcl L(X,Y)}=\sup_{\norm{\mbf x}_X=1}\norm{\mcl P\mbf x}_Y$ and where $\mcl L(X):=\mcl L(X,X)$. Our notational convention is to write functions in \textbf{bold} so that $\mbf x$ indicates $\mbf x \in  \R L_2$ and operators in calligraphic capital so $\mcl P$ indicates $\mcl P \in \mcl L(\R L_2)$. For Hilbert space, $X$ and $\mcl A\in \mcl L(X)$, $\mcl A^*$ denotes the adjoint operator satisfying $\ip{\mbf x}{\mcl A\mbf y}_{X} = \ip{\mcl A^*\mbf x}{\mbf y}_{X}$ for all $\mbf x, \mbf y \in X$.

$L_2^n[0,\infty)$ denotes $\R^n$-valued square-integrable signals, where $[0,\infty)$ indicates a temporal domain -- so for $ x \in L_2^n[0,\infty)$, we have $ x(t)\in \R^n$. Similarly, $\R L_2^n[0,\infty)$ denotes $\R L_2$-valued square-integrable signals. 
\blue{For $\mbf x \in \R L_2^n[0,\infty)$, we say $\mbf x(t) \rightharpoonup 0$ if $\mbf x(t)$ converges weakly -- i.e. $\lim_{t\rightarrow \infty} \ip{\mbf x(t)}{\mbf y}= 0$ for all $\mbf y \in \R L_2$.}
For suitably differentiable $\mbf x \in \R L_2^n[0,\infty)$,  $\dot{\mbf x}$ denotes the partial derivative $\frac{\partial \mbf x}{\partial t}$.


\section{A State-Space Framework for Optimal Control of PDEs}\label{sec:PIEs}
Creation of a general framework for the control of PDEs is complicated by the lack of a universal parameterization or representation of such problems. In this section, we examine the class of Partial Integral Equations (PIEs) and use this framework to propose a unified formulation of the stabilization and $\hinf$-optimal state-feedback controller synthesis problems.

PIEs are parameterized by PI operators, which are elements of the algebra, $\PI_4$, defined as follows.
\begin{defn}\label{def:4PI}
We say $\mcl P \in \PI_4 \subset \mcl L(\R L_2^{m_1,n_1},\R L_2^{m_2,n_2})$ if there exists a matrix $P$ and matrix polynomials $Q_1,Q_2,R_0,R_1$, and $R_2$ (of compatible dimension) such that
\begin{align*}
&\mcl P=\fourpi{P}{Q_1}{Q_2}{R_i}\bmat{x\\\mbf{x}}(s) := \bmat{Px + \int_{a}^{b}Q_1(s)\mbf{x}(s)ds\\Q_2(s)x+ \mcl R\mbf{x} (s)},\\
&\left(\mcl R\mbf x\right)(s)\hspace{-1.5mm}= \hspace{-1.5mm}R_0(s) \mbf x(s) +\hspace{-1.5mm}\int\limits_{a}^s  \hspace{-1.5mm}R_1(s,\theta)\mbf x(\theta)d \theta+\hspace{-1.5mm}\int\limits_s^b \hspace{-1.5mm}R_2(s,\theta)\mbf x(\theta)d \theta.
\end{align*}
\end{defn}
The vector space of PI operators, given compatible dimensions, are closed under composition, addition, adjoint and concatenation -- implying that $\PI_4$ is a composition algebra. 
\begin{lem}The vector space $\PI_4\subset \mcl L(\R L_2^{m,n})$ is a $^*$-algebra under composition.
\end{lem}
Significantly, the adjoint operator is readily obtained from the operator parameters as 
\begin{align}
&\fourpi{P}{Q_1(s)}{Q_2(s)}{R_0(s),R_1(s,\theta),R_2(s,\theta)}^*\notag \\
&\qquad =\fourpi{P^T}{Q_2(s)^T}{Q_1(s)^T}{R_0(s)^T,R_2(\theta,s)^T,R_1(\theta,s)^T}.\label{eqn:adjoint}
\end{align}
Similar formulae for composition, addition and concatenation can be found in~\cite{shivakumar_representation_TAC}.


The notation $\fourpi{P}{Q_1}{Q_2}{R_i}$ is widely used throughout this paper to indicate the PI operator associated with the matrix $P$ and polynomial parameters $Q_i$, $R_j$. The dimensions ($m_1,n_1,m_2,n_2$) of the domain ($\R L_2^{m_1,n_1}$) and range ($\R L_2^{m_2,n_2}$) of these operators are inherited from the dimensions of the matrices $P\in \R^{n_2 \times n_1}$ and polynomials $R_0(s)\in \R^{m_2 \times m_1}$. When clear from context, we will omit the dimensions of the domain and range and simply use $\R L_2$. In the case where a dimension is zero, we use $\emptyset$ in place of the associated parameter with dimension zero, so that for example, if $m_1=0$, we have an operator of the form
\[
\fourpi{\emptyset}{\emptyset}{Q_2}{R_i}.
\]
We may now define the class of Partial Integral Equations (PIEs) as follows.
\begin{defn} Given PI operators $\mcl T$, $\mcl A$, $\mcl B_i$, $\mcl C$, matrices $D_i$, and signals $w,u \in L_2[0,\infty)$ of compatible dimensions, we say $\mbf x \in \R L_2[0,\infty)$ and $z \in L_2[0,\infty)$ satisfy the PIE defined by $\{\mcl T,\mcl A,\mcl B_i,\mcl C,D_i\}$ with initial condition $\mbf x_0$ if 
\begin{equation}\label{eq:PIE_full}
\bmat{\partial_t(\mcl{T}\mbf{x})(t)\\z(t)}  = \bmat{\mcl{A}&\mcl{B}_1&\mcl{B}_2\\\mcl C& D_1 &D_2}\bmat{\mbf x(t)\\w(t)\\u(t)}
\end{equation}
for all $t \ge 0$ and $\mcl T \mbf x(0)=\mcl T\mbf x_0$.
\end{defn}
%

The stabilization and $\hinf$-optimal state feedback control problems may now be compactly formulated as follows.\vspace{1mm}
\begin{flalign}
&\textbf{Stabilization:}\;
\text{Find} ~\mcl K \in \PI_4 \quad \text{s.t.},\label{eq:stabilization_PIE}\\
&u(t)=\mcl K \mbf x(t) \; \text{implies} \;\lim_{t\rightarrow \infty}\; \mbf x(t)\rightharpoonup 0\notag&
\end{flalign}
for any $\mbf x(0) \in \R L_2$ and $\mbf x$ which satisfy Eq.~\eqref{eq:PIE_full}.\vspace{1mm}
\blue{\begin{flalign}
&\textbf{$H_\infty$-Optimal State Feedback:}
\;\inf\limits_{\mcl K \in \PI_4,~ \gamma \in \R_+} \quad \gamma \quad \text{s.t.},\label{eq:hinf_opt_PIE}\\
& \;\norm{z}_{L_2}\le \gamma \norm{w}_{L_2}\;  \text{for any $w\in L_2$ with $\norm{w}_{L_2}< M$}&\notag
\end{flalign}
when $z$ satisfies Eq.~\eqref{eq:PIE_full} for some $\mbf x$ with $u(t)=\mcl K \mbf x(t)$ and $\mcl T \mbf x(0)=0$. The choice of bound, $M \in \R^+$ is arbitrary.

Note that the optimal control framework here is chosen to mirror the classical problem of $H_\infty$-optimal control of state-space ODEs. Recent work, however, has examined the effect of including non-zero initial conditions in the problem formulation~\cite{chen2025lyapunov,coutinho2024l2}. In this case, performance would be measured using a control objective augmented with a term of the form of either $\gamma' \norm{\mcl T \mbf x(0)}^2$ or $\gamma' \norm{\mbf x(0)}^2$. This choice of augmented norm will then significantly affect the measured performance~\cite{krsticIO} and hence solution of the optimal control problem. To avoid such complications, then, we do not consider state to output performance at present. }
%

\subsection{Representation of the PDE Control Problem Using PIEs}
A broad class of linear delayed and partial differential equations on a rectangular domain have been shown to admit a PIE representation of the form of Eq.~\eqref{eq:PIE_full}. To avoid unnecessary digression, we will refer to~\cite{shivakumar_representation_TAC} for the full class of linear PDEs that admit such a representation. However, to better illustrate the scope of the results, we note that for any well-posed, vector-valued PDE of the following form, the formulae in~\cite{shivakumar_representation_TAC} (or Matlab interface in~\cite{toolbox:pietools}) may be used to construct a PIE representation of the form $\{\mcl T,\mcl A,\mcl B_i,\mcl C,D_{ij}\}$ where $\mcl T,\mcl A,\mcl B_i,\mcl C \in \PI_4$.
\begin{align}\label{eq:PDE_n}
    \dot{\mbf x}(t,s) &= \sum_{i=0}^n A_i(s)\partial_s^i\mbf x(t,s)+B_1(s)w(t)+B_2(s) u(t),\notag\\
    z(t) &= \int_a^b \sum_{i=0}^{n}C_i(s)\partial_s^i \mbf x(t,s)ds + D_1w(t) + D_2 u(t),\notag\\
    \sum_{i=0}^{n-1} &N_{i} \partial_s^i \mbf x(t,0)+\sum_{i=0}^{n-1} M_{i} \partial_s^i \mbf x(t,1)=0,
\end{align}
Here $\mbf x(t)\in L_2^m[0,1]$ and the parameters $A_i, B_i,$ and $C_i$ are polynomials and $D_i, N_i,$ and $M_i$ are matrices\footnote{Note that because this is a restricted class (excluding coupled ODEs), the parameters of PDE given here are not necessarily the same as used for the more general formulation in~\cite{shivakumar_representation_TAC} or~\cite{toolbox:pietools}. To avoid unnecessary details of this construction and interpretation of parameters, a software user interface has been created in PIETOOLS~\cite{toolbox:pietools} which automates the conversion of PDE or time-delay system to PIE.}. Note that for simplicity, we do not include coupled ODE-PDE systems in this class (as required for time-delay systems). However, the class of PIEs considered allows for inclusion of ODEs coupled to the PDE either in the domain or at the boundary.



While for non-trivial systems the conversion of a PDE to the associated PIE is most readily performed using the PIETOOLS software interface~\cite{toolbox:pietools}, the following example may aid in understanding the relationship between a PDE and its PIE representation.
\begin{ex}\label{ex:EB-representation}
Consider the vibration suppression problem for a cantilevered Euler-Bernoulli beam: $\ddot{\mbf u}=-0.1 {\partial_s^4\mbf u}+w(t)+u(t)$ with $0={\mbf u}(0)={\mbf u}_s(0)={\partial_s^2\mbf u}(1)={\partial_s^3\mbf u}(1)$ where ${\mbf u}$ is displacement, $w$ is external disturbance, and $u$ is controller input. The regulated output is defined as $z(t) = \bmat{u(t) &{\int_0^1\mbf u}(t,s)ds}^T$.

As discussed in~\cite{peet_2021AUTb}, to put this PDE in first-order form, we may define  $\mbf{v}_1:= \dot{\mbf u}$ and $\mbf v_2:= {\partial_s^2\mbf u}$, which yields
\begin{align*}
&\bmat{\dot{\mbf{v}}_1(t,s)\\\dot{\mbf{v}}_2(t,s)} = \bmat{0&-0.1\\1&0}\bmat{\partial_s^2\mbf{v}_{1}(t,s)\\\partial_s^2 \mbf{v}_{2}(t,s)}+\bmat{1\\0}w(t)+\bmat{1\\0}u(t),\\
&\mbf v_1(t,0) = \partial_s\mbf v_1(t,0) = \mbf v_{2}(t,1) =\partial_s\mbf v_{2}(t,1) = 0.
\end{align*}

To convert to a PIE, we use Cauchy's rule for repeated integration to obtain
\[
\mbf v(t,s)=\mbf v(t,0)+s\partial_s\mbf v(t,0)+\int_0^s (s-\theta)\partial_s^2\mbf v(t,\theta)d\theta.
\]
Denoting $\mbf x:=\partial_s^2 \mbf v$ and substituting boundary conditions, we obtain the map from PIE state, $\mbf x$ to PDE state, $\mbf v=\mcl T \mbf x$, as
\begin{equation*}
\bmat{\mbf v_1(t,s)\\ \mbf v_2(t,s)}\hspace{-1mm} =\hspace{-1mm} \int\limits_0^s \bmat{(s-\theta)&\hspace{-2mm}0\\0&\hspace{-2mm}0}\mbf x(t,\theta)d\theta + \int\limits_s^1 \bmat{0&\hspace{-2mm}0\\0&\hspace{-2mm}(\theta-s)}\mbf x(t,\theta)d\theta.
\end{equation*}
%
Substituting this expression into the dynamics and output equation, we obtain the PIE representation
\begin{align*}
&\partial_t\left(\mcl T \mbf x(t)\right)(s) = \overbrace{\bmat{0&-0.1\\1&0}}^{\mcl A}\mbf{x}(t,s)+\overbrace{\bmat{1\\0}}^{\mcl B_1}w(t)+\overbrace{\bmat{1\\0}}^{\mcl B_2}u(t).
\end{align*}
For the regulated output, we use a similar expansion on $\mbf u$, 
\[
\mbf u(s)\hspace{-1mm} = \hspace{-1mm}\mbf u(0) + s\partial_s\mbf u(0)+\int_0^s (s-\theta)\partial_s^2\mbf u(\theta)d\theta\hspace{-1mm}=\hspace{-1mm}\int_0^s (s-\theta) \mbf v_2(\theta)d\theta,
\]
to obtain through a change of order of integration
\begin{align*}
    z_2(t) &= \int_0^1 \mbf u(s) ds=\int_0^1 \half (1-s)^2\mbf v_2(t,s) ds.
\end{align*}
Now from $\mbf v_2(t,s)=\int_s^1 (\theta-s)\mbf x(\theta)d \theta$ through another change of order of integration
\[
z(t)=\underbrace{\int_0^1 \bmat{0&0\\0&\hspace{-.5mm}\frac{s^2}{4}-\frac{s^3}{6}+\frac{s^4}{24}}}_{\mcl C}\mbf{x}(t,s) ds+\underbrace{\bmat{1\\0}}_{D_2}u(t).
\]
To illustrate the $\mbf \Pi_4$ notation, we may also write the system parameters as
\begin{align*}
&\mcl T=\fourpi{\emptyset}{\emptyset}{\emptyset}{0,R_1,R_2}, \mcl A = \fourpi{\emptyset}{\emptyset}{\emptyset}{R_0,0,0}, \\
&\mcl B_i = \fourpi{\emptyset}{\emptyset}{Q_2}{\emptyset}, \mcl C = \fourpi{\emptyset}{Q_1}{\emptyset}{\emptyset}, D_2 = \bmat{1\\0},
\end{align*}
where
\begin{align*}
&R_1(s,\theta) \hspace{-0.75mm}=\hspace{-0.75mm} \bmat{s-\theta&0\\0&0},~R_2(s,\theta) \hspace{-0.75mm}=\hspace{-0.75mm} \bmat{0&0\\0&\theta-s},~Q_2 =\bmat{1\\0},\\
&R_0(s) \hspace{-0.75mm}=\hspace{-0.75mm} \bmat{0&\hspace{-1.5mm}-0.1\\1&\hspace{-1.5mm}0},~Q_1(s) \hspace{-0.75mm}=\hspace{-0.75mm} \bmat{0&\hspace{-1.5mm}0\\0&\hspace{-1.5mm}\frac{s^2}{4}-\frac{s^3}{6}+\frac{s^4}{24}}.
\end{align*}

\end{ex}

\subsection{A Note on Inputs at the Boundary}\label{subsec:note_boundary}
When the control input enters the dynamics of a PDE through the boundary conditions (e.g. $\mbf v(t,0)=u(t)$), novel questions arise that are not readily apparent in the PDE representation but are made explicit when using the PIE framework. These questions arise because PDEs with distributed states are partly rigid -- i.e. they are constrained by the continuity properties of Sobolev space necessary for boundary values to be well defined. The simplest illustration of this is the heat equation with boundary conditions $\mbf v(t,0)=u_1(t)$ and $\mbf v_s(t,0)=u_2(t)$. In this case, we have the relationship
\[
\mbf v(t,s)=u_1(t)+s u_2(t)+\int_0^s(s-\eta)\partial_s^2\mbf v(t,\eta)d\eta
\]
which implies that the effect of the input is felt \textit{immediately} throughout the distributed state and is NOT filtered through the dynamics (as is the case in ODEs or in-domain control). If we integrate this type of \textit{semi-algebraic} relationship into the dynamics, we obtain a unitary PIE representation of the heat equation, defined in terms of PIE state $\mbf x =  \partial_s^2\mbf v$, as
\[
\partial_t\left(\int_0^s(s-\eta)\mbf x(t,\eta)d\eta\right)=\mbf x(t,s)-\dot u_1(t)-s\dot u_2(t).
\]
In this representation, the partially algebraic nature of the boundary conditions is made explicit in that the dependence is not on $u_1, u_2$, but on their time-derivatives. 
This type of dependence is allowed in the parameterization of PIE defined in~\cite{shivakumar_representation_TAC} but is not included in the controller synthesis approach defined here. One reason is that there is a valid argument to be made that such types of control are non-physical in that they do not account for the inertia of the distributed state, and hence, such inputs would be better modeled by filtering through an ODE which represents the dynamics of the actuator. The other reason is that if we are searching for an $\hinf$-optimal controller, then we are trying to minimize the gain from $\norm{w}_{L_2}$ to $\norm{z}_{L_2}$ and if we were to include the derivative $\dot w$, this implies that the natural norm for $w$ is the Sobolev norm --  an approach taken in~\cite[Thm. 3.3]{Curtain:1995:IIL:207416}.

Therefore, to account for the case of inputs at the boundary, we will assume that the actual disturbance or input signal is not $w$ or $u$, but rather $\dot w,\dot u$, which we can relabel as disturbances $\hat w,\hat u$. This approach allows us to take any PIE optimal control problem involving time derivatives of the inputs and reformulate it as a PIE free of such derivatives. Specifically, if we are given a PIE representation of the form
\[
\bmat{\partial_t(\mcl{T}\mbf{x}(t))\\z(t)}  = \bmat{\mcl{A}&\mcl{B}_1&\mcl{B}_2\\\mcl C& D_1 &D_2}\bmat{\mbf x(t)\\w(t)\\u(t)} + \mcl B_{1d}\dot w(t)+ \mcl B_{2d}\dot u(t),
\]
then we will augment the state $\hat{\mbf x}(t):= \bmat{w(t)\\u(t)\\\mbf x(t)}$ and redefine the PIE system as{\small 
\begin{align*}
&\bmat{\partial_t\bbbbl(\overbrace{\bmat{I&0&0\\0&I&0\\0&0&\mcl{T}}}^{\hat{\mcl T}}\hat{\mbf x}(t)\bbbbr)\\\vspace{-2mm}\\z(t)}  \notag\\
&\qquad \qquad = \bmat{\overbrace{\bmat{0&0&0\\0&0&0\\\mcl{B}_1&\mcl{B}_2&\mcl{A}}}^{\hat{\mcl A}}&\overbrace{\bmat{I\\0\\\mcl B_{1d}}}^{\hat{\mcl B}_1}&\overbrace{\bmat{0\\I\\\mcl B_{2d}}}^{\hat{\mcl B}_2}\\\vspace{-2mm}\\
\underbrace{\bmat{D_1&D_2&\mcl C}}_{\hat{\mcl C}}&\underbrace{\bmat{~~0~~}}_{\hat D_1}&\underbrace{\bmat{~~0~~}}_{\hat D_2}}\bmat{\hat{\mbf x}(t)\\\hat w(t)\\\hat u(t)}
\end{align*}}
which is now of the form in Eq.~\eqref{eq:PIE_full} using the parameters $\hat{\mcl T}$, $\hat{\mcl A}$, $\hat{\mcl B}_1$, etc. Numerical examples of such boundary control problems are included in Section~\ref{sec:numerical} as Examples \ref{ex:hinf_diffusion} and \ref{ex:hinf_wave}.

\subsection{A Three-Stage Approach to Controller Synthesis}
Given a PIE formulation of the stabilizing and $H_\infty$-optimal state-feedback problems in Eqs.~\eqref{eq:stabilization_PIE} and~\eqref{eq:hinf_opt_PIE}, we will solve these problems in three stages.

First, for a given PIE, we define a dual PIE and show that this PIE has identical internal stability and input-output properties -- Section~\ref{sec:dual}. Second, in Section~\ref{sec:analysis}, we define a class of optimization problems defined by linear PI operator inequality (LPI) constraints for which we have efficient convex optimization algorithms. We use this operator inequality (LPI) framework to solve the problems of internal stability and input-to-output (I/O) $L_2$-gain performance in Subsections~\ref{subsec:LPI_stab} and~\ref{subsec:LPI_kyp}, respectively.  Third, in Section~\ref{sec:noboundary_control}, we apply the results of Section~\ref{sec:analysis} to the dual of the closed-loop PIE. We then use variable substitution to formulate the stabilization and $\hinf$-optimal state-feedback control problems as LPI problems. A formula for inversion of PI operators is presented in Section~\ref{sec:controller} and used to obtain the controller gains. Because solutions of the PIE and the PDE it represents are equivalent, these controller gains can then be applied directly to the PDE model -- a process illustrated in the examples in Section~\ref{sec:numerical}.

\section{Duality in PIEs} \label{sec:dual}
In this section, we show that for any PIE system of the form in Eq.~\eqref{eq:PIE_full}, we may associate a dual PIE system, also of the same form.

\begin{defn}\label{def:dualPIE}(Dual PIE)
Given a PIE system of the form
\begin{equation}\label{eq:pie_gen}
\bmat{\partial_t(\mcl{T}\mbf{x}(t))\\z(t)}  = \bmat{\mcl{A}&\mcl{B}\\\mcl C& D}\bmat{\mbf x(t)\\w(t)},\quad \mbf x(0)\in \R L_2^{m,n},
\end{equation}
defined by PI operators $\mcl{T}$, $\mcl{A}$, $\mcl{B}$, $\mcl{C}$ and matrix $D$, we define the ``dual PIE system" as
\begin{equation}\label{eq:pie_gen_dual}
\bmat{\mcl{T}^*\dot{\bar{\mbf{x}}}(t)\\\bar z(t)}  = \bmat{\mcl{A}^*&\mcl{C}^*\\\mcl B^*& D^T}\bmat{\bar{\mbf x}(t)\\\bar w(t)},\quad \bar{\mbf x}(0)\in \R L_2^{m,n},
\end{equation}
where $^*$ represents the adjoint of an operator with respect to the $\R L_2$ inner product.
\end{defn}
Given polynomial parameters defining the operators $\mcl{T}, \mcl{A}$, $\mcl{B}$, $\mcl{C}$, the polynomials that parameterize the dual PIE operators are readily obtained from Eq.~\eqref{eqn:adjoint}. 

Note that while the primal PIE in Eq.~\eqref{eq:pie_gen} is defined using $\partial_t(\mcl T\mbf x)$, its dual in Eq.~\eqref{eq:pie_gen_dual} is defined in terms of $\mcl T \dot{\bar{\mbf x}}$. This asymmetry may be relaxed if the primal solutions are assumed to be sufficiently differentiable. Having defined the dual of a PIE representation,  the following subsections prove primal-dual equivalence in terms of stability and $L_2$-gain. 


\subsection{Dual Stability Theorem}\label{subsec:dual-stab}
First, let us look at the internal stability of a PIE and its dual. 

\begin{thm}\label{thm:dual_stable}
(Dual Stability) Suppose $\mcl{T},\mcl{A}\in \mcl L(\R L_2^{m,n})$ are PI operators. The following statements are equivalent.
\begin{enumerate}[label=\alph*)]
\item $\lim\limits_{t\to \infty} \mcl{T}\mbf x(t)\rightharpoonup 0$ for any $\mbf x$ that satisfies $\partial_t(\mcl{T}\mbf{x}(t)) = \mcl{A}\mbf x(t)$ with initial condition $\mbf x(0)\in \R L_2^{m,n}$.
\item $\lim\limits_{t\to \infty} \mcl{T}^*\bar{\mbf x}(t)\rightharpoonup 0$ for any $\bar{\mbf x}$ that satisfies $\mcl{T}^*\dot{\bar{\mbf x}}(t) = \mcl{A}^*\bar{\mbf x}(t)$ with initial condition $\bar{\mbf x}(0)\in \R L_2^{m,n}$.
\end{enumerate}
\end{thm}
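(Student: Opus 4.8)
The plan is to reduce both statements to the strong (asymptotic) stability of a pair of mutually adjoint $C_0$-semigroups and then transfer stability across the adjoint using a conserved bilinear pairing. First I would use well-posedness of the PIE to express the ``physical'' state as a semigroup orbit: there is a $C_0$-semigroup $\Phi(t)$ with $\mcl T\mbf x(t)=\Phi(t)\mcl T\mbf x(0)$ and, for the dual PIE \eqref{eq:PIE_adjoint_general}, a semigroup $\Psi(t)$ with $\mcl T^*\bar{\mbf x}(t)=\Psi(t)\mcl T^*\bar{\mbf x}(0)$. Since $\mcl T\mbf x(0)$ ranges over $\text{im}(\mcl T)$ and $\mcl T^*\bar{\mbf x}(0)$ over $\text{im}(\mcl T^*)$, statement (1) is exactly the decay of $\Phi(t)$ on $\text{im}(\mcl T)$ and statement (2) the decay of $\Psi(t)$ on $\text{im}(\mcl T^*)$. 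The hypothesis $H''=H$ is what lets me handle the two directions symmetrically: the dual of the dual PIE is the primal PIE, so it suffices to prove (1)$\Rightarrow$(2).

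The key computation is a conservation law obtained by pairing a primal trajectory with a dual one. Differentiating $t\mapsto \ip{\mcl T\mbf x(t)}{\bar{\mbf x}(\tau-t)}$ and substituting $\mcl T\dot{\mbf x}=\mcl A\mbf x$ and $\mcl T^*\dot{\bar{\mbf x}}=\mcl A^*\bar{\mbf x}$, the two resulting terms coincide and cancel, so the pairing is constant in $t$. Evaluating at the endpoints yields the identity $\ip{\mcl T\mbf x(t)}{\bar{\mbf x}_0}=\ip{\mbf x_0}{\mcl T^*\bar{\mbf x}(t)}$ for every $t$ and all initial data, equivalently the intertwining $\Psi(t)\mcl T^*=\mcl T^*\Phi(t)^*$. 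Thus $\Psi$ is $\Phi^*$ conjugated by $\mcl T^*$, and statement (2) is equivalent to $\mcl T^*\Phi(t)^*\mbf g\to 0$ for every $\mbf g\in H$.

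To finish I would transfer stability. Assuming (1), the identity gives $\ip{\mbf x_0}{\mcl T^*\bar{\mbf x}(t)}=\ip{\Phi(t)\mcl T\mbf x_0}{\bar{\mbf x}_0}\to 0$ for every $\mbf x_0\in H$, so $\mcl T^*\bar{\mbf x}(t)\rightharpoonup 0$ weakly with no further hypotheses. To upgrade this to norm convergence I would use that the orbit $\{\mcl T^*\bar{\mbf x}(t)\}=\mcl T^*\{\Phi(t)^*\bar{\mbf x}_0\}$ is relatively norm-compact: for $\mcl T$ arising from a PDE there is no multiplier part ($R_0=0$) and the integral kernels are square-integrable on the bounded interval $[a,b]$, so $\mcl T$, and hence $\mcl T^*$, is Hilbert--Schmidt and therefore compact; applied to the (bounded) dual orbit it sends the weakly null net to a norm-null one, which is statement (2). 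The converse is identical, applied to the dual PIE via $H''=H$.

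The hard part, and the step I expect to be the genuine obstacle, is precisely this upgrade from weak to strong convergence. Strong stability of a $C_0$-semigroup does not in general pass to its adjoint --- the left-shift semigroup on $L_2[0,\infty)$ is strongly stable while its adjoint, the right shift, is isometric --- so the bare duality identity only delivers weak convergence of $\mcl T^*\bar{\mbf x}(t)$. What rescues the argument is the smoothing by $\mcl T$ together with the bounded spatial domain, which makes $\mcl T$ compact; one also has to pin down boundedness of the orbits, which holds for these systems (and is automatic in the exponential regime). Indeed, if stability is taken in the exponential sense certified by the LPI \eqref{lpi:stab}, the transfer is immediate since $\norm{\Phi(t)}=\norm{\Phi(t)^*}$, and neither compactness nor the weak-convergence detour is needed.
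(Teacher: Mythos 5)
Your core computation is exactly the one the paper uses: pair a primal trajectory against a dual one, integrate by parts (equivalently, observe that the pairing $\ip{\bar{\mbf x}(t-s)}{\mcl T\mbf x(s)}$ is conserved), and extract the identity $\ip{\bar{\mbf x}(0)}{\mcl T\mbf x(t)}=\ip{\mcl T^*\bar{\mbf x}(t)}{\mbf x(0)}$, from which statement (1) gives $\ip{\mcl T^*\bar{\mbf x}(t)}{\mbf x(0)}\to 0$ for every $\mbf x(0)\in H$. Up to that point you and the paper coincide, and the symmetry argument for the converse is the same.

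Where you diverge is that you refuse to conclude norm convergence from this, and you are right to hesitate: the paper's proof writes ``we conclude that $\lim_{t\to\infty}\mcl T^*\bar{\mbf x}(t)=0$'' at exactly this point, silently passing from weak to strong convergence of $\mcl T^*\bar{\mbf x}(t)$. Your shift-semigroup example shows that implication is false for general bounded operators, so you have identified a genuine gap in the argument as the paper gives it. However, your repair does not prove the theorem as stated either: compactness of $\mcl T$ is not among the hypotheses (which assume only bounded $\mcl T,\mcl A$ on a Hilbert space), and it can genuinely fail even for PIEs built from PDEs --- the construction in Fig.~\ref{fig:eqn-block} gives $\mcl T$ a multiplier part $G_0=\text{diag}(I_{n_0},0)$, so whenever undifferentiated $L_2$ states are present ($n_0>0$) the operator $\mcl T$ is not Hilbert--Schmidt, contrary to your claim that $R_0=0$ always holds. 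The boundedness of the dual orbit that your compactness argument needs is likewise assumed rather than derived from statement (1). So the situation is: same skeleton as the paper, an honest flagging of the step the paper elides, and a patch that is logically sound but only closes the gap under additional hypotheses (compact $\mcl T$, bounded orbits) that would have to be added to the theorem statement --- or, as you note, one can sidestep the issue entirely by interpreting stability in the exponential sense certified by the LPIs, where $\norm{\Phi(t)}=\norm{\Phi(t)^*}$ makes the transfer immediate.
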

Note that ``$\rightharpoonup 0$'' in Thm.~\ref{thm:dual_stable} denotes weak convergence in the Hilbert space $\R L_2$.

\begin{proof}
To show sufficiency, suppose $\mbf x$ satisfies $\partial_t(\mcl{T}\mbf{x}(t)) = \mcl{A}\mbf x(t)$ with initial condition $\mbf x(0)\in \R L_2^{m,n}$ and $\lim_{t\rightarrow \infty}\mcl{T}\mbf{x}(t)\rightharpoonup 0$. Let $\bar{\mbf x}$ satisfy $\mcl{T}^*\dot{\bar{\mbf x}}(t) = \mcl{A}^*\bar{\mbf x}(t)$ with initial condition $\bar{\mbf x}(0)\in \R L_2^{m,n}$. Then for any finite $t>0$, using integration-by-parts, we get
{
\begin{align}
&\int_{0}^{t} \ip{\bar{\mbf x}(t-s)}{\partial_s(\mcl{T}\mbf{x}(s))}_{\R L_2}ds\label{eqn:integral1}\\
&\qquad=	\ip{\bar{\mbf x}(0)}{\mcl T\mbf x(t)}-\ip{\bar{\mbf x}(t)}{\mcl{T}\mbf x(0)}  \notag\\
&\qquad\qquad- \int_{0}^t \ip{\partial_s{\bar{\mbf x}}(t-s)}{\mcl{T}\mbf x(s)}ds.\notag
\end{align}}
Then, we use a change of variable ($\theta = t-s$) on the last term in Eq.~\eqref{eqn:integral1} to show
\begin{align*}
\int_{0}^t \ip{\partial_s{\bar{\mbf x}}(t-s)}{\mcl{T}\mbf x(s)}ds\hspace{-.5mm}&=\hspace{-1.5mm}\int_{0}^t \ip{\dot{\bar{\mbf x}}(\theta)}{\mcl{T}\mbf x(t-\theta)}d\theta\\&\hspace{-0.5mm}=\hspace{-1.5mm} \int_{0}^t \ip{\mcl T^*\dot{\bar{\mbf x}}(\theta)}{\mbf x(t-\theta)}d\theta.
\end{align*}
Furthermore, using the same variable change on the left-hand side of Eq.~\eqref{eqn:integral1}, we get
\begin{align*}
&\int_{0}^{t} \ip{\bar{\mbf x}(t-s)}{\partial_s(\mcl{T}\mbf{x}(s))}_{\R L_2}ds\\
&=\int_{0}^{t} \ip{\bar{\mbf x}(t-s)}{\mcl A\mbf x(s)}ds=\int_{0}^{t} \ip{\mcl A^*\bar{\mbf x}(\theta)}{\mbf x(t-\theta)}d\theta.
\end{align*}
Substituting these two expressions into Eq.~\eqref{eqn:integral1}, we have
\begin{align*}
&\int_{0}^{t} \ip{\mcl A^*\bar{\mbf x}(\theta)}{\mbf x(t-\theta)}d\theta \\
&\qquad=\ip{\bar{\mbf x}(0)}{\mcl{T}\mbf x(t)}-\ip{\bar{\mbf x}(t)}{\mcl{T}\mbf x(0)} \\
&\qquad\qquad+ \int_{0}^t \ip{\mcl T^*\dot{\bar{\mbf x}}(\theta)}{\mbf x(t-\theta)}d\theta.
\end{align*}
However, $\mcl A^*\bar{\mbf x}(\theta) = \mcl T^*\dot{\bar{\mbf x}}(\theta)$ for all $\theta\in [0,t]$, and hence
\begin{align}\label{eq:tmp}
\ip{\bar{\mbf x}(0)}{\mcl{T}\mbf x(t)}=\ip{\mcl T^*\bar{\mbf x}(t)}{\mbf x(0)},~~~ \text{for all} ~t>0.
\end{align}
Since $\lim_{t\rightarrow \infty}\mcl{T}\mbf x(t)= 0$, we have
\[
\lim_{t\rightarrow \infty}\ip{\mcl T^*\bar{\mbf x}(t)}{\mbf x(0)}= \lim_{t\rightarrow \infty}\ip{\bar{\mbf x}(0)}{\mcl{T}\mbf x(t)}= 0.\]
Since ${\mbf x}(0)\in \R L_2$ may be chosen arbitrarily and $\R L_2$ is Hilbert, this implies $\lim_{t\rightarrow \infty}\mcl T^*\bar{\mbf x}(t) \rightharpoonup 0$. Thus we have sufficiency. 

Since $\mcl T^{**}=\mcl T$, and existence of $\mcl T^*\dot{\bar{\mbf x}}(t)$ guarantees existence of $\partial_t(\mcl T^*\bar{\mbf x}(t))$, sufficiency implies necessity.
\end{proof}

Next, we show that exponential stability of the primal and dual PIE are equivalent when we define exponential stability in the following sense.
\begin{defn}[Exponential Stability]\label{defn:expstability}
We say that the PIE  defined by $\{\mcl T, \mcl A\} \subset \PI_4$ is \textbf{Exponentially Stable} with decay rate $\alpha >0$ if there exists some $M>0$ such that for any $\mbf x_0 \in \R L_2$, if $\mcl T\mbf x(0)=\mcl T\mbf x_0$ and
$\partial_t(\mcl T\mbf x(t)) = \mcl A\mbf x(t)$, then
\[
\norm{\mcl T\mbf x(t)}_{\R L_2}\le M\norm{\mbf x_0}_{\R L_2}e^{-\alpha t}\quad \text{for all}~t \ge 0.
\]
\end{defn}
Note that this definition implies that the PDE state ($\mcl T\mbf x$) decays exponentially in the $\R L_2$-norm but does not necessarily guarantee exponential stability of the PIE state ($\mbf x$).

\begin{cor}\label{cor:exponential_stable}
Suppose $\mcl{T},\mcl{A}\in \mcl L(\R L_2^{m,n})$ are PI operators. The following statements are equivalent:
\begin{enumerate}[label=\alph*)]
\item $\norm{\mcl T\mbf x(t)}\le M\norm{\mbf x(0)}e^{-\alpha t}$
for any $\mbf x$ that satisfies $\partial_t(\mcl{T}\mbf x)(t) = \mcl{A}\mbf x(t)$ with initial condition $\mbf x(0)\in \R L_2^{m,n}$.
\item $\norm{\mcl T^*\bar{\mbf x}(t)}\le M\norm{\bar{\mbf x}(0)}e^{-\alpha t}$
for any $\bar{\mbf x}$ that satisfies $\mcl{T}^*\dot{\bar{\mbf x}}(t) = \mcl{A}^*\bar{\mbf x}(t)$ with initial condition $\bar{\mbf x}(0)\in \R L_2^{m,n}$.
\end{enumerate}
\end{cor}
\begin{proof}
To show sufficiency,
suppose $\bar{\mbf x}$ satisfies $\mcl{T}^*\dot{\bar{\mbf x}}(t) = \mcl{A}^*\bar{\mbf x}(t)$ for some initial condition $\bar{\mbf x}(0)\in \R L_2^{m,n}$. Then for any $T>0$, let $\mbf x$ satisfy $\partial_t(\mcl{T}\mbf x(t)) = \mcl{A}\mbf x(t)$ with initial condition $\mbf x(0) = \mcl T^* \bar{\mbf x}(T)$. Then, we have from Eq. \eqref{eq:tmp} in the proof of Theorem~\ref{thm:dual_stable},
\begin{align*}
\ip{\bar{\mbf x}(0)}{\mcl T\mbf x(T)}=\ip{\mcl T^*\bar{\mbf x}(T)}{\mbf x(0)} = \norm{\mcl T^*\bar{\mbf x}(T)}^2.
\end{align*}
Then, from the Cauchy-Schwarz inequality,
\begin{align*}
\norm{\mcl T^*\bar{\mbf x}(T)}^2 &= \ip{\bar{\mbf x}(0)}{\mcl T\mbf x(T)} \le \norm{\mcl T\mbf x(T)}\norm{\bar{\mbf x}(0)}\\
&\le M\norm{\mbf x(0)}e^{-\alpha T}\norm{\bar{\mbf x}(0)} \\
&= M\norm{\mcl T^*\bar{\mbf x}(T)}e^{-\alpha T}\norm{\bar{\mbf x}(0)},
\end{align*}
which implies $\norm{\mcl T^*\bar{\mbf x}(T)}\le M\norm{\bar{\mbf x}(0)}e^{-\alpha T}$.
Since $T$ is arbitrary, we have sufficiency. Since $\mcl T^{**}=\mcl T$, sufficiency implies necessity.
\end{proof}

\subsection{Dual $L_2$-Gain Theorem}\label{subsec:dual-hinfty}
Having proved that the internal stability of a PIE and its dual are equivalent, we now show that their I/O properties are identical (analogous to the dual KYP lemma for ODE systems).

\begin{thm}\label{thm:dual_hinfty}
(Dual $L_2$-gain)	Suppose $\mcl{T},\mcl{A}\in \mcl L(\R L_2^{m,n})$, $\mcl{B}\in \mcl L(\R^p,\R L_2^{m,n})$, and $\mcl{C}\in \mcl L(\R L_2^{m,n},\R^r)$ are PI operators and $D\in \R^{r\times p}$ is a matrix. The following statements are equivalent.
\begin{enumerate}[leftmargin=*,labelsep=2pt,label=\alph*)]
\item For any $w\in L_2^p[0,\infty)$, $\mbf x(t)\in \R L_2^{m,n}$ and $z(t)\in \R^r$ that satisfy $\mcl T\mbf x(0)=0$ and
\begin{align}\label{eq:PIE_IO}
\bmat{\partial_t(\mcl{T}\mbf x)(t)\\z(t)}  = \bmat{\mcl{A}&\mcl{B}\\\mcl C& D}\bmat{\mbf x(t)\\w(t)},
\end{align}
we have that $\norm{z}_{L_2}\le \gamma\norm{w}_{L_2}$.
\item For any $\bar{w}\in L_2^r[0,\infty)$, $\bar{\mbf x}(t)\in \R L_2^{m,n}$ and $\bar{z}(t)\in \R^p$ that satisfy $\mcl T^*\bar{\mbf x}(0)=0$ and
\begin{align}\label{eq:PIE_IO_adjoint}
\bmat{\mcl{T}^*\dot{\bar{\mbf{x}}}(t)\\\bar z(t)}  =
\bmat{\mcl{A}^*&\mcl{C}^*\\\mcl B^*& D^T}
\bmat{\bar{\mbf x}(t)\\\bar w(t)},
\end{align}
we have that $\norm{\bar{z}}_{L_2}\le \gamma\norm{\bar w}_{L_2}$.
\end{enumerate}
\end{thm}
\begin{proof}
To show sufficiency, let $\mbf x(t)\in \R L_2^{m,n}$ and $z(t)\in \R^r$ satisfy Eq.~\eqref{eq:PIE_IO} for $\mbf x(0)=0$ and some $w\in L_2^p[0,\infty)$. Then, $\norm{z}_{L_2}\le \gamma\norm{w}_{L_2}$. Let $\bar{\mbf x}(t)\in \R L_2^{m,n}$ and $\bar{z}(t)\in \R^p$ satisfy Eq.~\eqref{eq:PIE_IO_adjoint} for $\bar{\mbf x}(0)=0$ and some $\bar{w}\in L_2^r[0,\infty)$. Then, as in Eq.~\eqref{eqn:integral1} in the proof of Theorem~\ref{thm:dual_stable} and substituting initial conditions, we find
\begin{align}
&\int_{0}^{t}\hspace{-1.5mm} \ip{\bar{\mbf x}(t-s)}{\partial_s(\mcl{T}\mbf x(s))}\hspace{-0.75mm}ds\hspace{-0.75mm}=\hspace{-0.75mm} \int_{0}^t\hspace{-1.5mm} \ip{\mcl T^*\dot{\bar{\mbf x}}(\theta)}{\mbf x(t-\theta)}\hspace{-0.75mm}d\theta.\hspace{3.5mm}(\star)\notag
\end{align}
Furthermore, by using the variable change $\theta=t-s$ on the left-hand side of the above equation,
\begin{align}
&\int_{0}^{t} \ip{\bar{\mbf x}(t-s)}{\partial_s(\mcl{T}\mbf x(s))}ds \hspace{43mm} (\#)\notag\\
&=\hspace{-1.5mm}\int_{0}^{t}\ip{\bar{\mbf x}(t-s)}{\mcl A\mbf x(s)}ds+\int_{0}^{t} \ip{\bar{\mbf x}(t-s)}{\mcl Bw(s)}ds\notag\\
&=\hspace{-1.5mm}\int_{0}^{t} \ip{\mcl A^*\bar{\mbf x}(\theta)}{\mbf x(t-\theta)}d\theta+\int_{0}^{t} (\mcl B^*\bar{\mbf x}(\theta))^Tw(t-\theta)d\theta.\notag
\end{align}
Combining Eqs.~($\star$) and ($\#$), we obtain
\begin{align*}
&\int_{0}^t \ip{\mcl T^*\dot{\bar{\mbf x}}(\theta)}{\mbf x(t-\theta)}d\theta\\
&= \int_{0}^{t} \ip{\mcl A^*\bar{\mbf x}(\theta)}{\mbf x(t-\theta)}d\theta  + \int_{0}^{t} (\mcl B^*\bar{\mbf x}(\theta))^Tw(t-\theta)d\theta.
\end{align*}
However, $\mcl{T}^*\dot{\bar{\mbf x}}(t) -\mcl{A}^*\bar{\mbf x}(t)= \mcl{C}^*\bar{w}(t)$, so
\begin{align*}
&\int_{0}^t \ip{\mcl{C}^*\bar{w}(\theta)}{\mbf x (t-\theta)}d\theta \\
&= \int_{0}^t \ip{\mcl{T}^*\dot{\bar{\mbf x}}(\theta) -\mcl{A}^*\bar{\mbf x}(\theta)}{\mbf x (t-\theta)}d\theta\\
&\qquad \qquad = \int_{0}^{t} (\mcl B^*\bar{\mbf x}(\theta))^Tw(t-\theta)d\theta.
\end{align*}
Since $z = \mcl C\mbf x+D w$, we obtain
\begin{align*}
&\int_{0}^{t} \bar{w}(\theta)^T z(t-\theta)d\theta-\int_{0}^{t} \bar{w}(\theta)^T(Dw(t-\theta))d\theta\\
&=\int_{0}^{t} \bar{w}(\theta)^T(\mcl C \mbf x(t-\theta))d\theta = \int_{0}^t \ip{\mcl{C}^*\bar{w}(\theta)}{\mbf x(t-\theta)}d\theta\\
&= \int_{0}^{t} (\mcl B^*\bar{\mbf x}(\theta))^Tw(t-\theta)d\theta.
\end{align*}
Likewise, we know $\bar z = \mcl B^*\bar{\mbf x}+ D^T\bar{w}$. Hence
\begin{align*}
&\int_{0}^t \bar{z}(\theta)^Tw(t-\theta)d\theta - \int_{0}^{t} D^T\bar{w}(\theta)^Tw(t-\theta)d\theta\\
&= \int_{0}^{t} (\mcl B^*\bar{\mbf x}(\theta))^Tw(t-\theta)d\theta\\
&=\int_{0}^{t} \bar{w}(\theta)^T z(t-\theta)d\theta-\int_{0}^{t} \bar{w}(\theta)^T(Dw(t-\theta))d\theta.
\end{align*}
We conclude that for any $t>0$, if $z$ and $w$ satisfy the primal PIE and $\bar{z}$ and $\bar w$ satisfy the dual PIE, then
\begin{align}\label{eq:dual_io}
\int_{0}^{t} \bar{z}(\theta)^T w(t-\theta)d\theta = \int_{0}^t \bar{w}(\theta)^Tz(t-\theta)d\theta.
\end{align}
Now, for any $\bar w \in L_2$, let $\bar z$ solve the dual PIE for some $\bar{\mbf x}$. For any fixed $T>0$, define $w(t) = \bar z(T-t)$ for $t\le T$ and $w(t)=0$ for $t >T$. Then $w \in L_2$ and for this input, let $z$ solve the primal PIE for some $\mbf x$. Then, if we define the truncation operator $P_T$, we have
\begin{align*}
\norm{P_T\bar z}_{L_2}^2 &= \int_{0}^T \bar z(s)^T\bar z(s)ds= \int_{0}^T \bar z(s)^T w(T-s)ds \\
&= \int_{0}^T \bar w(s)^T z(T-s)ds\le \norm{P_T\bar w}_{L_2}\norm{P_T z}_{L_2}\\
&\le\norm{P_T\bar w}_{L_2}\norm{z}_{L_2}\le\gamma \norm{P_T\bar w}_{L_2}\norm{w}_{L_2}\\
&= \gamma\norm{P_T\bar w}_{L_2}\norm{P_T w}_{L_2} = \gamma\norm{P_T\bar w}_{L_2}\norm{P_T \bar z}_{L_2}.
\end{align*}
Therefore, we have that $\norm{P_T\bar z}_{L_2} \le \gamma\norm{P_T\bar w}_{L_2}$ for all $T\ge 0$. We conclude that $\norm{\bar z}_{L_2} \le \gamma  \norm{\bar w}_{L_2}$. Since $\mcl T^{**}=\mcl T$ and
\[
\bmat{(\mcl{A}^{*})^*&(\mcl{B}^{*})^*\\(\mcl C^{*})^*& (D^{T})^T}=\bmat{\mcl{A}^*&\mcl{C}^*\\\mcl B^*& D^T}^*=\bmat{\mcl{A}&\mcl{B}\\\mcl C& D}^{**}=\bmat{\mcl{A}&\mcl{B}\\\mcl C& D}
\]
we have that sufficiency implies necessity.
\end{proof}

\begin{rem}\label{rem:intertwining}
Note the relationship between primal and dual mappings $w \mapsto z$ and $\bar w \mapsto \bar z$ as given in Eq.~\eqref{eq:dual_io} of the proof of Theorem~\ref{thm:dual_hinfty}:
\begin{align*}
\int_{0}^{t} \bar{z}(\theta)^T w(t-\theta)d\theta = \int_{0}^t \bar{w}(\theta)^Tz(t-\theta)d\theta.
\end{align*}
If one were to define a Laplace transform for these inputs ($\hat{w},\hat z,\hat{\bar w},\hat{\bar z}$) and transfer function for the systems ($\hat z(s)=G(s)\hat w(s)$ and $\hat{\bar z}(s)=G_d(s)\hat{\bar w}(s)$), then this equation would imply $\hat{\bar z}(s)^T\hat{w}(s)=\hat{\bar w}(s)^T \hat z(s)$ or $\hat{\bar w}(s)^T G_d(s)^T\hat{w}(s)=\hat{\bar w}(s)^T G(s)\hat w(s)$ so that $G_d(s)^T=G(s)$ ---  which is precisely the standard interpretation of the dual transfer function for ODEs. In addition, we note that while Theorem~\ref{thm:dual_hinfty} considers input-output stability of the primal and dual, the relationship in Eq.~\eqref{eq:dual_io} holds for any finite time, $t$, and hence does not require the primal or dual to be input-output stable.
\end{rem}

\begin{rem} Finally, we remark that the significance of Theorems \ref{thm:dual_stable} and \ref{thm:dual_hinfty} is not simply that a dual representation exists but that it has the same parameterization as the primal (making the primal and dual interchangeable). In addition, the proofs of Theorems \ref{thm:dual_stable} and \ref{thm:dual_hinfty} do not utilize the algebraic structure of the PI algebra -- implying that the duality result (and intertwining relationship) holds for any class of well-posed systems parameterized by a set of bounded linear operators on a reflexive Hilbert space which is closed under adjoint.
\end{rem}
\vspace{-4mm}

\section{Primal-Dual LPIs for Stability and $\hinf$-Norm}\label{sec:analysis}
In this section, we propose equivalent primal and dual conditions for both exponential stability and $\hinf$-norm (defined as maximum $L_2$-gain). These conditions are generalizations of the primal-dual Lyapunov and KYP LMIs used for stability and $\hinf$-norm analysis of state-space ODEs -- e.g. $A^TP+PA<0$ and $AP+PA^T<0$ for stability. In Section~\ref{sec:noboundary_control}, the dual conditions will be used to synthesize stabilizing and optimal controllers.

The primal-dual conditions in this section and Section~\ref{sec:noboundary_control} are formulated using Linear PI operator inequality (LPI) conditions. Such LPI constraints are a generalization of LMI constraints -- e.g. $\mcl A^*\mcl P\mcl T+\mcl T^*\mcl P\mcl A\prec 0$ and $\mcl A\mcl P\mcl T^*+\mcl T \mcl P\mcl A^*\prec 0$ for stability. Such conditions are constructed using a generalization of the quadratic storage functions used for state-space ODEs -- i.e. $V(x)=x^TPx$ becomes $V(\mbf x)=\ip{\mcl T\mbf x}{\mcl P\mcl T\mbf x}$. Note, however, that the duality results presented here are not quite as direct as in the state-space ODE case -- where if $V=x^TPx$ proves the stability of $\dot x=Ax$, then $V(x)=x^TP^{-1}x$ proves the stability of the dual.

Because the conditions presented here are in the form of LPIs, we implicitly assume that these conditions can be solved efficiently using convex optimization. Fortunately, computational methods for solving LPIs are well-developed and have been presented in other work~\cite{shivakumar_2019CDC}. In brief, these methods construct a positive PI operator using a quadratic form involving a positive matrix and \textit{$n^{th}$-order basis} of PI operators, $\mcl Z_n$. For example, $\mcl P \succeq 0$ if there exists some matrix $Q\ge0$ such that $\mcl P=\mcl Z_n^* Q\mcl Z_n=\mcl Z_n^* Q^{\half}Q^{\half} \mcl Z_n\succeq 0$, where the basis $\mcl Z_n$ is constructed using the vector of monomials, $Z_n$, of degree $n$ or less as
\begin{equation}\label{eq:monomial_bases}
\mcl Z_n\bmat{x\\\mbf x}(s) := \bmat{x\\Z_n(s)\mbf x(s)\\\int_a^s (Z_n(s)\otimes Z_n(\theta))\mbf x(\theta)d\theta\\\int_s^b (Z_n(s)\otimes Z_n(\theta))\mbf x(\theta)d\theta}.
\end{equation}
The highest order of these monomials, $n$, can be used as a measure for the complexity of the LPIs, which will later be used in Section~\ref{sec:numerical} to numerically verify the accuracy and convergence of the conditions.
For this numerical verification, the associated LPI conditions will be enforced using the Matlab toolbox, PIETOOLS. This toolbox offers convenient functions to convert PDEs to PIE, declare PI decision variables, add LPI constraints, and solve the resulting optimization problem. We refer to the PIETOOLS User Manual~\cite{manual} and \cite{das2020digital} for details.

\subsection{Primal and Dual LPIs for Stability of PDEs}\label{subsec:LPI_stab}
In the following theorem, we propose primal and dual LPI tests for exponential stability and use Cor.~\ref{cor:exponential_stable} to show that feasibility of either implies exponential stability of both the primal and dual systems.

\begin{thm}\label{thm:stable_LPI} Suppose that \textbf{either} of the two statements hold for some $\alpha>0$ and $\mcl P\in \PI_4$ such that $\mcl P=\mcl P^* \succeq \eta I$ for some $\eta>0$.
\begin{enumerate}[label=\alph*)]
\item $\mcl{T}^*\mcl{P}\mcl{A}+\mcl{A}^*\mcl{P}\mcl{T}\preceq -2\alpha \mcl T^*\mcl P\mcl T$
\item $\mcl{T}\mcl{P}\mcl{A}^*+\mcl{A}\mcl{P}\mcl{T}^*\preceq -2\alpha \mcl T\mcl P\mcl T^*$
\end{enumerate}
Then the PIEs defined by $\{\mcl T, \mcl A\} \subset \PI_4$ and  $\{\mcl T^*, \mcl A^*\} \subset \PI_4$ are \textbf{Exponentially Stable} with decay rate $\alpha$.
\end{thm}
\begin{proof}
Suppose a) holds. Define $V(\mbf x) = \ip{\mcl T\mbf x}{\mcl P\mcl T\mbf x}_{\R L_2}$. Since any $\mcl P\in\mbf \Pi_4$ is bounded in $\mcl L(\R L_2)$, 
\[
\eta \norm{\mcl T\mbf x}^2\le V(\mbf x)\le \norm{\mcl P}_{\mcl L(\R L_2)}\norm{\mcl T\mbf x}^2.
\]
Suppose $\mbf x(t)$ satisfies $\mbf x(0)=\mbf x_0$ and $\mcl T \dot{\mbf x}(t)=\mcl A \mbf x(t)$. Differentiating $V(\mbf x(t))$ with respect to time, $t$, we obtain
\begin{align*}
\dot{V}(\mbf x(t)) &= \ip{\mcl T\mbf x(t)}{\mcl P\mcl A\mbf x(t)}+\ip{\mcl A\mbf x(t)}{\mcl P\mcl T\mbf x(t)}\\
&= \ip{\mbf x(t)}{(\mcl T^*\mcl P\mcl A+\mcl A^*\mcl P\mcl T)\mbf x(t)} \le -2\alpha V(\mbf x(t)).
\end{align*}
Therefore, $\dot{V}(\mbf x(t))\le -2\alpha V(\mbf x(t))$ for all $t$ and, from the Gronwall-Bellman inequality, $V(\mbf x(t))\le V(\mbf x(0))e^{-2\alpha t}$. Let $\beta=\norm{\mcl T}_{\mcl L(\R L_2)}$ and $\zeta = \norm{\mcl P}_{\mcl L(\R L_2)}$. Then
\begin{align*}
\norm{\mcl T\mbf x(t)}^2 &\le \frac{1}{\eta}V(\mbf x(t)) \le \frac{1}{\eta}V(\mbf x(0))e^{-2\alpha t}\\
&\le \frac{1}{\eta}\zeta\norm{\mcl T\mbf x(0)}^2e^{-2\alpha t}\le \frac{\zeta\beta^2}{\eta}\norm{\mbf x(0)}^2e^{-2\alpha t}.
\end{align*}
By taking square root on both sides, $\norm{\mcl T\mbf x(t)}\le M\norm{\mbf x(0)}e^{-\alpha t}$ where $M = \sqrt{\frac{\zeta}{\eta}}\beta$.
This implies the PIE defined by $\{\mcl T, \mcl A\} \subset \PI_4$ is \textbf{Exponentially Stable} with decay rate $\alpha$.
Then, from Corollary \ref{cor:exponential_stable}, the PIE defined by $\{\mcl T^*, \mcl A^*\} \subset \PI_4$ is \textbf{Exponentially Stable} with decay rate $\alpha$.

The proof similarly establishes exponential stability for b) by swapping $\mcl T\mapsto \mcl T^*$ and $\mcl A \mapsto \mcl A^*$ and proving stability of the dual system.
\end{proof}
Both a) and b) in Thm.~\ref{thm:stable_LPI} imply exponential stability of both primal and dual using the definition of exponential stability in Def.~\ref{defn:expstability}, $
\norm{\mcl T\mbf x(t)}_{\R L_2}\le M\norm{\mbf x_0}_{\R L_2}e^{-\alpha t}$ where the upper bound is defined using the $L_2$-norm of the PIE state (which is equivalent to the Sobolev norm of the PDE state). This slightly stronger norm is needed to preserve the symmetry of the primal and dual. However, we also note that from the proof of Thm.~\ref{thm:stable_LPI}, a) implies exponential stability of the primal and b) implies exponential stability of the dual using an upper bound of the form $\norm{\mcl T\mbf x(t)}_{\R L_2}\le M\norm{\mcl T \mbf x_0}_{\R L_2}e^{-\alpha t}$. Practically, however, there is no difference between these definitions of exponential stability since we always assume that $\mbf x_0 \in \R L_2$.

\subsection{Primal and Dual KYP Lemma for PDEs}\label{subsec:LPI_kyp}
In the following theorem, we propose LPI generalizations of the primal and dual versions of the KYP Lemma and use Theorem~\ref{thm:dual_hinfty} to show that the solution of either proves a bound on the $L_2$-gain of both the primal and dual systems.

Note that the LPI conditions in Theorem~\ref{thm:gain_lpi} are expressed using an extension of block matrices to block PI operators -- The formal definition of concatenation of PI operators can be found in~\cite[Lemmas 40 and 41]{shivakumar_representation_TAC}. However, because the domain and range of PI operators of the form given in Def.~\ref{def:4PI} are an ordered concatenation of $\R$ and $L_2$, the arrangement of the blocks of the operators in the proposed LPI conditions are slightly different from that in the traditional formulations of the KYP Lemma for state-space ODEs.

\begin{thm}\label{thm:gain_lpi}
Suppose that \textbf{either} of the two statements hold for some $\gamma>0$ and $\mcl P\in \PI_4$ such that $\mcl P=\mcl P^* \succeq 0$.
\begin{enumerate}[leftmargin=*,labelsep=2pt,label=\alph*)] \setlength{\abovedisplayskip}{0pt}
\item	$\bmat{-\gamma I& D& \mcl{C}\\D^T&-\gamma I & \mcl{B}^*\mcl P\mcl T\\\mcl{C}^*&\mcl{T}^*\mcl P\mcl B&\mcl{T}^*\mcl P\mcl{A}+\mcl{A}^*\mcl P \mcl{T}}\hspace{-1.0mm}\preceq\hspace{-0.5mm} 0$

\item	$\bmat{-\gamma I&D^T&\mcl B^*\\
D&-\gamma I&\mcl{C}\mcl{P}\mcl T^*\\
\mcl B&\mcl{T}\mcl{P}\mcl C^*&\mcl T\mcl{P}\mcl{A}^*+\mcl{A}\mcl{P}\mcl T^*}	\hspace{-1.0mm}\preceq\hspace{-0.5mm} 0$
\end{enumerate}
Then, for any $w\in L_2$, if $z$ satisfies either
\begin{equation}
\bmat{\partial_t(\mcl{T}\mbf{x}(t))\\z(t)}  = \bmat{\mcl{A}&\mcl{B}\\\mcl C& D}\bmat{\mbf x(t)\\w(t)},\label{eqn:primal_temp}
\end{equation}
or
\begin{equation}
\bmat{\mcl{T}^*\dot{\mbf{x}}(t)\\z(t)}  = \bmat{\mcl{A}^*&\mcl{C}^*\\\mcl B^*& D^T}\bmat{\mbf x(t)\\w(t)},\label{eqn:dual_temp}
\end{equation}
for some $\mbf x(t)$ with $\mcl T\mbf{x}(0)=0$, then $\Vert z\Vert_{L_2} \le \gamma\norm{w}_{L_2}$.

\end{thm}
\begin{proof}
Suppose a) holds. Define $V(\mbf x) = \ip{\mcl{T}\mbf{x}}{\mcl P \mcl{T}\mbf{x}}_{\R L_2}$. For any $w\in L_2$, suppose $z$ satisfies Eq.~\eqref{eqn:primal_temp} for some $\mbf x$ with $\mbf{x}(0)=0$. Differentiating $V(\mbf x(t))$ with respect to time, $t$, we obtain
\begin{align*}
&\dot{V}(\mbf x(t)) = \ip{\mcl{T}\mbf x(t)}{\mcl P \left(\mcl A\mbf x(t) + \mcl Bw(t)\right)}\\
&\qquad \qquad+ \ip{\left(\mcl A\mbf x(t) + \mcl Bw(t)\right)}{\mcl P \mcl{T}\mbf x(t)}\\
&= \ip{\bmat{w(t)\\\mbf x(t)}}{\bmat{0&\mcl{B}^*\mcl P\mcl T\\\mcl{T}^*\mcl P\mcl B&\mcl{T}^*\mcl P\mcl{A}+\mcl{A}^*\mcl P \mcl{T}}\bmat{w(t)\\\mbf{x}(t)}}.
\end{align*}
Now defining auxiliary variable $v(t) = \frac{1}{\gamma}z(t)$, we obtain

\begin{align*}
&\dot{V}(\mbf x(t))- \gamma\norm{w(t)}_{\R}^2 +\frac{1}{\gamma}\norm{z(t)}_{\R}^2\\
&=\dot{V}(\mbf x(t))- \gamma \norm{w(t)}^2-\frac{1}{\gamma}\norm{z(t)}^2+\frac{2}{\gamma}\norm{z(t)}^2\\
&= \dot{V}(\mbf x(t)) -\gamma\norm{w(t)}^2 -\gamma \norm{v(t)}^2+v(t)^T z(t)+z(t)^T v(t)\\
&={\small\ip{\bmat{v(t)\\w(t)\\\mbf x(t)}}{\bmat{-\gamma I& D& \mcl{C}\\D^T&-\gamma I & \mcl{B}^*\mcl P\mcl T\\\mcl{C}^*&\mcl{T}^*\mcl P\mcl B&\mcl{T}^*\mcl P\mcl{A}+\mcl{A}^*\mcl P \mcl{T}}\bmat{v(t)\\w(t)\\\mbf x(t)}}}\le 0.
\end{align*}
Integrating this inequality in time, we obtain
\begin{align*}
V(\mbf x(T))-V(\mbf x(0))&\le  \gamma\int_0^T\norm{w(t)}^2 dt  -\frac{1}{\gamma}\int_0^T\norm{z(t)}^2 dt.
\end{align*}
Now, since $\mcl T\mbf x(0)=0$ and $V(\mbf x(T))\ge 0$ for all $T\ge 0$, we obtain $\norm{z}^2_{L_2}\le\gamma^2\norm{w}^2_{L_2}$. Furthermore, Theorem~\ref{thm:dual_hinfty} implies the same bound hold if $z$ and $\mbf x$ satisfy Eq.~\eqref{eqn:dual_temp}.

Since $\mcl T^{**}=\mcl T$ and
\[
\bmat{(\mcl{A}^{*})^*&(\mcl{B}^{*})^*\\(\mcl C^{*})^*& (D^{T})^T}=\bmat{\mcl{A}^*&\mcl{C}^*\\\mcl B^*& D^T}^*=\bmat{\mcl{A}&\mcl{B}\\\mcl C& D}^{**}=\bmat{\mcl{A}&\mcl{B}\\\mcl C& D},
\]
we have that b) likewise implies the same bounds.
\end{proof}

Before applying the results of Theorem~\ref{thm:gain_lpi} to controller synthesis, we note that the operator variable $\mcl P$,  which represents the storage function $V(\mbf x)=\ip{\mcl T\mbf x}{\mcl P\mcl T \mbf x}$ in Theorem~\ref{thm:gain_lpi}, is not required to be \textit{strictly} positive -- thus allowing for the use of non-coercive storage functions (See~\cite{jacob_2018}). When we turn to the problem of optimal controller synthesis in Subsection~\ref{subsec:optimalcontrol}, however, we will require strict positivity of $\mcl P$ so that we may reconstruct the controller gains as $\mcl K=\mcl Z\mcl P^{-1}$.

\subsection{Duality using Extended Lyapunov Functions}\label{subsec:alternativeLPI}
The LPI conditions for exponential stability and $L_2$-gain in Thms.~\ref{thm:stable_LPI} and~\ref{thm:gain_lpi} were obtained by parameterizing candidate Lyapunov/storage functions of the form $V(\mbf x)=\ip{\mcl T\mbf x}{\mcl P \mcl T\mbf x}_{L_2}$ where $\mbf x$ is the PIE state. This parametrization was chosen to ensure that the function $V$ is both lower and upper bounded with respect to the original PDE state, $\mbf v:=\mcl T \mbf x$ -- i.e. $V=\ip{\mcl T\mbf x}{\mcl P \mcl T\mbf x}_{L_2}=\ip{\mbf v}{\mcl P \mbf v}_{L_2}$ and $\mcl P \succ 0$ implies $\alpha \norm{\mbf v}^2\le V(\mbf v)\le \beta \norm{\mbf v}^2$ for some $\alpha,\beta>0$. However, these upper and lower bounds are unnecessary when computing $L_2$-gain. Therefore, in this subsection we propose an extension of Thm.~\ref{thm:gain_lpi} which allows $V$ to be defined by a mix of PIE and PDE states as $V(\mbf x)=\ip{\mcl Q\mbf x}{\mcl T\mbf x}_{L_2}$ for some PI operator $Q$. By including the PIE state, $\mbf x$, in addition to the PDE state, $\mcl T\mbf x$, this \textit{extended} functional form allows for partial derivatives of the PDE state to appear. 

To illustrate, consider the heat equation $\partial_t{\mbf v}(t) = \partial_s^2 \mbf v(t)$ with boundary conditions $\mbf v(0)=\partial_s\mbf v(1)=0$. 
The extended parametrization now allows us to use storage functions defined in terms of partial derivatives -- such as $V=\norm{\mbf v_s}^2$. Specifically, if we choose $\mcl Q=-I$, and defining the PIE state as $\mbf x := \partial_s^2\mbf v$ (where $\mbf v=\mcl T\mbf x$ for some $\mcl T$), we have 
\[
V(\mbf x)=\ip{\mcl Q\mbf x}{ \mcl T\mbf x}=-\ip{\partial_s^2\mbf v}{\mbf v}=\norm{\partial_s\mbf v}^2.
\]
%

We note that this extended parametrization of storage functions includes, as a subset, those functions used in Thm.~\ref{thm:gain_lpi}. Specifically, for any given $\mcl P$, if we choose $\mcl Q$ as $\mcl Q=\mcl P \mcl T$, then the extended storage function has the form $V(\mbf x)=\ip{\mcl Q\mbf x}{\mcl T\mbf x}_{L_2}=\ip{\mcl T\mbf x}{\mcl P \mcl T\mbf x}_{L_2}$, as was used in Thm.~\ref{thm:gain_lpi}.
The drawback of such extended Lyapunov function candidates, however, is that they are not upper-bounded with respect to the PDE state since the $L_2$ norms and Sobolev norms are not equivalent.
This means that we cannot extend the exponential stability criterion in Thm.~\ref{thm:stable_LPI} without redefining our notion of exponential stability.

\begin{thm}\label{thm:gain_lpi_new}
Suppose that \textbf{either} of the two statements hold for some $\gamma>0$ and $\mcl R, \mcl Q\in \PI_4$ such that $\mcl R=\mcl R^* \succeq 0$.
\begin{enumerate}[leftmargin=*,labelsep=2pt,label=\alph*)] \setlength{\abovedisplayskip}{0pt}
\item $\mcl T^*\mcl Q=\mcl Q^*\mcl T=\mcl R$ and \[ 
\bmat{-\gamma I&D&\mcl C\\
D^T&-\gamma I&\mcl{B}^*\mcl{Q}\\
\mcl C^*&\mcl{Q}^*\mcl B&\mcl Q^*\mcl{A}+\mcl{A}^*\mcl Q}  \hspace{-1.0mm}\preceq\hspace{-0.5mm} 0\]

\item $\mcl T\mcl Q^*=\mcl Q\mcl T^*=\mcl R$ and \[ 
\bmat{-\gamma I&D^T&\mcl B^*\\
D&-\gamma I&\mcl{C}\mcl{Q}^*\\
\mcl B&\mcl{Q}\mcl C^*&\mcl Q\mcl{A}^*+\mcl{A}\mcl Q^*}  \hspace{-1.0mm}\preceq\hspace{-0.5mm} 0\]
\end{enumerate}
Then, for any $w\in L_2$, if $z$ satisfies either
\begin{equation}
\bmat{\partial_t(\mcl{T}\mbf{x}(t))\\z(t)}  = \bmat{\mcl{A}&\mcl{B}\\\mcl C& D}\bmat{\mbf x(t)\\w(t)},\label{eqn:primal_temp_2}
\end{equation}
or
\begin{equation}
\bmat{\mcl{T}^*\dot{\mbf{x}}(t)\\z(t)}  = \bmat{\mcl{A}^*&\mcl{C}^*\\\mcl B^*& D^T}\bmat{\mbf x(t)\\w(t)},\label{eqn:dual_temp_2}
\end{equation}
for some $\mbf x(t)$ with $\mbf{x}(0)=0$, then $\Vert z\Vert_{L_2} \le \gamma\norm{w}_{L_2}$.
\end{thm}
\begin{proof}
Suppose a) holds.  Define $V(\mbf x)=\ip{\mbf x}{\mcl R \mbf x}\ge 0$. Since $\mcl Q^*\mcl T = \mcl T^*\mcl Q$, we have $V(\mbf x)=\ip{\mcl T\mbf x}{\mcl Q \mbf x}=\ip{\mcl Q\mbf x}{\mcl T \mbf x}$.
Now, for any $w\in L_2$, suppose $z$ satisfies Eq.~\eqref{eqn:primal_temp_2} for some $\mbf x$ with $\mbf{x}(0)=0$. Differentiating $V(\mbf x(t))$ with respect to time, $t$, we obtain
\begin{align*}
&\dot{V}(\mbf x(t)) = \ip{\partial_t(\mcl T\mbf x(t))}{\mcl Q\mbf x(t)}_{\R L_2}+\ip{\mcl Q\mbf x(t)}{\partial_t(\mcl T\mbf x(t))}_{\R L_2}\\
&= \ip{\mcl Q\mbf x(t)}{\mcl A\mbf x(t) + \mcl Bw(t)}+ \ip{\mcl A\mbf x(t) + \mcl Bw(t)}{\mcl Q\mbf x(t)}\\
&= \ip{\bmat{w(t)\\\mbf x(t)}}{\bmat{0&\mcl{B}^*\mcl{Q}\\\mcl{Q}^*\mcl B&\mcl{Q}^*\mcl{A}+\mcl{A}^*\mcl Q }\bmat{w(t)\\\mbf{x}(t)}}.
\end{align*}
Now defining auxiliary variable $v(t) = \frac{1}{\gamma}z(t)$, we obtain
\begin{align*}
&\dot{V}(\mbf x(t))- \gamma\norm{w(t)}_{\R}^2 +\frac{1}{\gamma}\norm{z(t)}_{\R}^2\\
&=\dot{V}(\mbf x(t))- \gamma \norm{w(t)}^2-\frac{1}{\gamma}\norm{z(t)}^2+\frac{2}{\gamma}\norm{z(t)}^2\\
&= \dot{V}(\mbf x(t)) -\gamma\norm{w(t)}^2 -\gamma \norm{v(t)}^2+v(t)^T z(t)+z(t)^T v(t)\\
&={\small\ip{\bmat{v(t)\\w(t)\\\mbf x(t)}}{\bmat{-\gamma I&D&\mcl C\\
D^T&-\gamma I&\mcl{B}^*\mcl{Q}\\
\mcl C^*&\mcl{Q}^*\mcl B&\mcl Q^*\mcl{A}+\mcl{A}^*\mcl Q}\bmat{v(t)\\w(t)\\\mbf x(t)}}}\le 0.
\end{align*}
The rest of the proof is exactly as in the proof of Thm.~\ref{thm:gain_lpi}.

\end{proof}
Finally, we note that feasibility of the conditions in Thm.~\ref{thm:gain_lpi} imply the conditions of Thm.~\ref{thm:gain_lpi_new} are satisfied with $\mcl Q=\mcl P \mcl T$ for a) and   $\mcl Q=\mcl P \mcl T^*$ for b). In addition, if it is known that if $\{\mcl T, \mcl A\}$ or $\{\mcl T^*, \mcl A^*\}$ is asymptotically stable then the non-negativity constraints on $\mcl P$ in Thm.~\ref{thm:gain_lpi} and on $\mcl Q$ in Thm.~\ref{thm:gain_lpi_new} may be removed entirely.

\section{LPIs for Stabilizing and $H_\infty$-Optimal State-Feedback Controller Synthesis}\label{sec:noboundary_control}
In this section, we return to the state-feedback controller synthesis problems defined in Section~\ref{sec:PIEs} (Eqs.~\eqref{eq:stabilization_PIE} and~\eqref{eq:hinf_opt_PIE}). Specifically, given a PIE system
\[
\bmat{\partial_t(\mcl{T}\mbf{x}(t))\\z(t)}  = \bmat{\mcl{A}&\mcl{B}_1&\mcl{B}_2\\\mcl C& D_1 &D_2}\bmat{\mbf x(t)\\w(t)\\u(t)},
\]
our goal is to synthesize state-feedback controllers of the form $u(t)=\mcl K \mbf x(t)$, where $\mbf x$ is the state of the PIE and the controller gain, $\mcl K$, is a PI operator.
To do this, we apply Corollary~\ref{cor:exponential_stable} and Theorem ~\ref{thm:gain_lpi} to the closed-loop system
\begin{align}\label{eq:PIE_cl}
\bmat{\partial_t(\mcl{T}\mbf{x}(t))\\z(t)}  = \bmat{\mcl{A} +\mcl{B}_2 \mcl K&\mcl{B}_1\\\mcl C+D_2\mcl K& D_1}\bmat{\mbf x(t)\\w(t)}.
\end{align}
The resulting operator inequality then includes the term $\mcl K \mcl P$ which is bilinear in the decision variables $\mcl K$ and $\mcl P$. However, as described in the introduction, and following the approach used for state-space ODEs, we then construct an equivalent LPI by making the invertible variable substitution $\mcl K \mcl P \rightarrow \mcl Z$. An iterative algorithm for the inversion of this variable substitution is presented in Section~\ref{sec:controller} -- allowing us to reconstruct the controller gains for implementation in simulation or real-time feedback. 

\subsection{Stabilizing State-Feedback Control}\label{subsec:stablecontrol}

The following Corollary defines an LPI whose solution provides an exponentially stabilizing state-feedback controller for the PDE associated with the PIE $\{\mcl T,\mcl A,\mcl B_2\}$.

\begin{cor}\label{cor:stabilizing_control_lpi}
Suppose there exist some $\alpha>0$ and $\mcl Z, \mcl P\in \PI_4$ such that $\mcl P=\mcl P^* \succeq \eta I$ for some $\eta>0$, and\vspace{1mm}
\[ (\mcl{AP+B}_2\mcl Z)\mcl{T}^*+\mcl{T}(\mcl{AP+B}_2\mcl Z)^*\preceq -2\alpha \mcl{T}\mcl P\mcl{T}^*.\vspace{1mm}
\]
Then if $\mcl K= \mcl Z \mcl P^{-1}$, the PIE defined by $\{\mcl T, \mcl A+\mcl B_2 \mcl K\} \subset \PI_4$ is \textbf{Exponentially Stable} with decay rate $\alpha$.
\end{cor}
\begin{proof}
Let $\mcl P$, $\mcl Z$, and $\mcl K$ be as defined above. Then, $\mcl Z = \mcl K \mcl P$, and
\begin{align*}
&(\mcl{AP+B}_2\mcl Z)\mcl{T}^*+\mcl{T}(\mcl{AP+B}_2\mcl Z)^* \\
&\qquad= (\mcl{AP}+\mcl B_2\mcl{KP})\mcl{T}^*+\mcl{T}(\mcl{AP}+\mcl B_2\mcl{KP})^*\\
&\qquad = (\mcl{A+B}_2\mcl K)\mcl P\mcl{T}^*+\mcl{T}\mcl P(\mcl{A+B}_2\mcl K)^*\preceq -2\alpha \mcl{T}\mcl P\mcl{T}^*.
\end{align*}

Then, from Theorem \ref{thm:stable_LPI} (statement b), the PIE defined by $\{\mcl{T},\mcl{A+B}_2\mcl K\}$, is exponentially stable with decay rate $\alpha$.
\end{proof}

\subsection{$\hinf$-Optimal State-Feedback Control}\label{subsec:optimalcontrol}

Next, we provide an LPI to find the $\hinf$-optimal state-feedback controller, $\mcl K$, for PIEs with inputs and outputs of the form Eq.~\eqref{eq:PIE_cl}. Here, we use $(\cdot)^*$ notation to represent the symmetric adjoint/transpose completion of block operators.

\begin{cor}\label{cor:optimal_control_lpi}
Suppose there exist some $\gamma>0$ and $\mcl Z, \mcl P\in \PI_4$ such that $\mcl P=\mcl P^* \succeq \eta I$ for some $\eta>0$, and
\[\bmat{-\gamma I&D_{1}^T&\mcl B_1^*\\
(\cdot)^*&-\gamma I&(\mcl{C}\mcl{P}+D_{2}\mcl{Z})\mcl{T}^*\\
(\cdot)^*&(\cdot)^*&(\cdot)^*+\mcl T(\mcl{A}\mcl{P}+\mcl{B}_2\mcl{Z})^*}\preceq 0.\]
Then if $\mcl K= \mcl Z \mcl P^{-1}$,  for any $w\in L_2$, if $z$ satisfies 	
\[
\bmat{\partial_t(\mcl{T}\mbf{x}(t))\\z(t)}  = \bmat{\mcl{A} +\mcl{B}_2 \mcl K&\mcl{B}_1\\\mcl C+D_2\mcl K& D_1}\bmat{\mbf x(t)\\w(t)},
\]
for some $\mbf x$ with $\mcl T\mbf x(0)=0$, then $\norm{z}_{L_2}\le \gamma \norm{w}_{L_2}$.
\end{cor}
\begin{proof}
Let $\mcl P$, $\mcl Z$, and $\mcl K$ satisfy the corollary statement. Then, $\mcl Z = \mcl K \mcl P$, and
\begin{align*}
&\bmat{-\gamma I&D_{1}^T&\mcl B_1^*\\
(\cdot)^*&-\gamma I&(\mcl{C}\mcl{P}+D_{2}\mcl{Z})\mcl{T}^*\\
(\cdot)^*&(\cdot)^*&(\cdot)^*+\mcl T(\mcl{A}\mcl{P}+\mcl{B}_2\mcl{Z})^*}\\
&= \bmat{-\gamma I&D_{1}^T&\mcl B_1^*\\
(\cdot)^*&-\gamma I&(\mcl{C}+D_{2}\mcl K)\mcl P\mcl{T}^*\\
(\cdot)^*&(\cdot)^*&(\cdot)^*+\mcl T\mcl{P}(\mcl{A}+\mcl B_2\mcl K)^*}\preceq0.
\end{align*}

Thus, from Theorem \ref{thm:gain_lpi} (statement b),  for $z,w,\mbf x$ as in the corollary statement, we have that $\Vert z\Vert_{L_2} \le \gamma\norm{w}_{L_2}$.
\end{proof}

Given a PDE with associated PIE defined by $\{\mcl T, \mcl A,\mcl B_i,\mcl C, D_i\}$, Corollary \ref{cor:optimal_control_lpi} provides a controller gain $\mcl K=\mcl Z\mcl P^{-1}$  such that $u(t)=\mcl K \mbf x(t)$  achieves a closed-loop performance bound of $\norm{z}_{L_2}\le \gamma \norm{w}_{L_2}$. Note that this controller does not necessarily imply internal exponential stability unless the LPI in Corollary~\ref{cor:stabilizing_control_lpi} is negative definite in a suitable sense.

\subsection{Optimal Controllers using Extended Lyapunov Functions}\label{subsec:alternativeLPI_control}
In this subsection, we briefly propose alternative optimal state-feedback controller synthesis conditions based on the extended $L_2$-gain conditions described in Subsection~\ref{subsec:alternativeLPI} and which allow for Lyapunov/storage functions which include partial derivatives of the state.  Specifically, for the problem of $H_\infty$-optimal state-feedback controller synthesis, we have the following alternative formulation of Thm.~\ref{thm:gain_lpi_new}.
\begin{cor}\label{cor:optimal_control_LPI_new}
Suppose there exist some $\gamma>0$ and $\mcl Z, \mcl R, \mcl Q\in \PI_4$ such that $\mcl T\mcl Q=\mcl Q^*\mcl T^*=\mcl R\succeq 0$, $\mcl Q$ is invertible, and
\[\bmat{-\gamma I&D_{1}^T&\mcl B_1^*\\
D_1&-\gamma I&\mcl{C}\mcl{Q}+D_{2}\mcl{Z}\\
\mcl B_1&\hspace{-2mm}\mcl{Q}^*\mcl{C}^*+\mcl{Z}^*D_{2}^T&(\mcl{A}\mcl{Q}+\mcl{B}_2\mcl{Z})^*+(\mcl{A}\mcl{Q}+\mcl{B}_2\mcl{Z})}\preceq 0.\]
Then if $\mcl K= \mcl Z \mcl Q^{-1}$,  for any $w\in L_2$, if $z$ satisfies   
\[
\bmat{\partial_t(\mcl{T}\mbf{x}(t))\\z(t)}  = \bmat{\mcl{A} +\mcl{B}_2 \mcl K&\mcl{B}_1\\\mcl C+D_2\mcl K& D_1}\bmat{\mbf x(t)\\w(t)},
\]
for some $\mbf x$ with $\mcl T\mbf x(0)=0$, then $\norm{z}_{L_2}\le \gamma \norm{w}_{L_2}$.
\end{cor}
\begin{proof}
The proof is similar to the proof of Cor.~\ref{cor:optimal_control_lpi} using the alternative conditions defined in Thm.~\ref{thm:gain_lpi_new}.
\end{proof}

\section{Controller Reconstruction}\label{sec:controller}

In this section, we presume that for some given PIE, the LPIs in Corollary~\ref{cor:stabilizing_control_lpi},~\ref{cor:optimal_control_lpi}, or~\ref{cor:optimal_control_LPI_new} have solution $\mcl Z,\mcl P \in \mbf \Pi_4$ where $\mcl P \succ 0$. In this case, a controller for the PDE (with state $\mbf v$) associated with the given PIE may be constructed as $u(t)=\mcl Z \mcl P^{-1}\mbf x(t)$ where $\mbf x$ is the PIE state. Now suppose $\mcl K=\mcl Z \mcl P^{-1}$ is a PI operator, and $\mbf x(t)\in \R L_2$ is partitioned as $\mbf x(t,s)=[x_1(t),\mbf x_2(t,s)]$. Then such a controller has the form \vspace{-2mm}
\[
u(t)=K_1 x_1(t)+\int_a^b K_2(s)\mbf x_2(t,s)ds.\vspace{-1mm}
\]
Real-time estimates of the $\mbf x_2(t)$ may be obtained by measurement and spatial differentiation of the PDE state or through use of an estimator, as described in~\cite{das_2019CDC}. However, construction of the gains $K_1,K_2(s)$ requires us to compute the inverse operator $\mcl P^{-1}$.

The question of inverting a PI operator of the form $\mcl P=\fourpi{P}{Q}{Q^T}{R_i}\in \PI_4$ has been considered in~\cite{peet_sicon2020} under the restriction $R_1=R_2$ (the case of \textit{separable} kernels) and without the terms $P,Q$ (no ODE states). Unfortunately, the restriction $R_1=R_2$ often results in suboptimal controllers. In this section, therefore, we lift the restriction $R_1=R_2$ using a new method for inverting PI operators based on generalization of a result in~\cite[Chapter IX.2]{gohberg}. This method is presented in stages: first neglecting ODE states and restricting $R_0=I$ (Lemma~\ref{lem:inverse_gohberg}); then accounting for $R_0$ (Cor.~\ref{cor:inverse_gohberg}); then accounting for ODE states (Lemma~\ref{lem:inverse_b}). 

Because the initial steps of this method do not consider ODE states, Lemma~\ref{lem:inverse_gohberg} and Cor.~\ref{cor:inverse_gohberg} utilize the simplified notation
\[
\threepi{R_0,R_1,R_2}:=\fourpi{\emptyset}{\emptyset}{\emptyset}{R_0,R_1,R_2}.
\]
The following lemma provides a construction for the inverse of a PI operator, $\threepi{I,H_1,H_2}$ where separability of parameters $H_1,H_2$ is implied by separability of polynomials --  i.e. any polynomial, $H$ can be written as $H(s,\theta) = F(s)G(\theta)$ for some polynomials $F, G$.   
\begin{lem}[Gohberg~\cite{gohberg}] \label{lem:inverse_gohberg} Define $\mcl P := \threepi{I,H_1,H_2}$ where $H_i(s,\theta) = -F_i(s)G_i(\theta)$ for some
$F_i, G_i\in L_2[a,b]$. Let $U$ and $V$ be the unique solutions to
\begin{align*}
    U(s) &= I +\int_a^s B(\theta)C(\theta)U(\theta) d\theta,\\
    V(\theta) &= I - \int_a^\theta V(s)B(s)C(s)ds,    
\end{align*}
where $C(s) = \bmat{F_1(s)&F_2(s)}$, $B(s) = \bmat{G_1(s)\\-G_2(s)}$. Then $V(s)U(s)=U(s)V(s)=I$. Furthermore, if we partition
\[
U(b) = \bmat{U_{11}&U_{12}\\U_{21}&U_{22}}, \quad U_{22}\in \R^{q\times q},
\]
where $q$ is the number of columns in $F_2$, then $\mcl P$ is invertible if and only if $U_{22}$ is invertible and $\mcl P^{-1}=\threepi{I,M_1,M_2}$ where
\[
P = \bmat{0&\hspace{-1.25mm}0\\U_{22}^{-1}U_{21}&\hspace{-1.25mm}I},~\mat{	M_1(s,\theta) = C(s)U(s)(I-P)V(\theta)B(\theta),\\M_2(s,\theta) = -C(s)U(s)PV(\theta)B(\theta).}
\]
\end{lem}
Note that, by construction,  $M_1$ and $M_2$ are \textit{separable} --  implying that $\mcl P^{-1}$ is a PI operator, albeit not necessarily with polynomial parameters. 

We now extend Lemma~\ref{lem:inverse_gohberg} to PI operators of the form $\mcl P=\threepi{R_0,R_1,R_2}$, where $\mcl P\succ 0$ implies invertibility of $R_0$.
\begin{cor}\label{cor:inverse_gohberg}
Suppose $R_0^{-1} \in L_2$ and define $H_1(s,\theta)=R_0(s)^{-1}R_1(s,\theta)$ and $H_2(s,\theta)=R_0(s)^{-1}R_2(s,\theta)$. Now let $M_1,M_2$ be as defined in Lemma~\ref{lem:inverse_gohberg}. Then $\threepi{R_i}^{-1}=\threepi{\hat R_i}$ where $\hat R_0=R_0^{-1}$, $\hat R_1(s,\theta)=M_2(s,\theta)R_0^{-1}(\theta)$, and $\hat R_2(s,\theta)=M_1(s,\theta)R_0^{-1}(\theta)$.
\end{cor}
\begin{proof}
The proof follows immediately from Lemma~\ref{lem:inverse_gohberg} and operator composition rules~\cite{shivakumar_representation_TAC} as
\begin{align*}
\threepi{R_0,R_1,R_2}^{-1}
&= (\threepi{R_0,0,0} \threepi{I,H_1,H_2})^{-1} \\
&= \threepi{I,M_1,M_2} \threepi{R_0^{-1},0,0}= \threepi{\hat R_0,\hat R_1 ,\hat R_2}.
\end{align*}
\end{proof}
Next, we extend Corollary~\ref{cor:inverse_gohberg} to $\mcl P\in \PI_4$ using a generalization of a standard formula for block matrix inversion.
\begin{lem}\label{lem:inverse_b}
Suppose $\threepi{R_i}^{-1}=\threepi{\hat R_i}$. Then $\mcl P:=\fourpi{P}{Q_1}{Q_2}{R_i} \in \PI_4$ is invertible if and only if the matrix
\[ T = P - \fourpi{\emptyset}{Q_1}{\emptyset}{\emptyset}\threepi{\hat R_i}\fourpi{\emptyset}{\emptyset}{Q_2}{\emptyset}\] is invertible. Furthermore, 
\[
\mcl P^{-1} = \mcl U\fourpi{T^{-1}}{0}{0}{\hat R_i}\mcl V
\]
where
\begin{align*}
&\mcl U = \fourpi{I}{0}{0}{\hat R_i}\fourpi{I}{0}{-Q_2}{R_i}\\
&\mcl V = \fourpi{I}{-Q_1}{0}{R_i}\fourpi{I}{0}{0}{\hat R_i}.
\end{align*}
\end{lem}

\begin{proof}
Let $\hat R$ be as specified and define
$\mcl R = \fourpi{I}{0}{0}{\hat R_i}$. Then $\mcl P$ can be decomposed as
\[
\mcl P= \overbrace{\fourpi{I}{Q_1}{0}{R_i}\mcl R}^{\mcl M:=}\overbrace{\fourpi{T}{0}{0}{R_i}}^{\mcl Q:=}\overbrace{\mcl R\fourpi{I}{0}{Q_2}{R_i}}^{\mcl N:=}.
\]
Clearly, $\mcl M,\mcl N$ are triangular, which implies $\mcl N^{-1} = \mcl U$ and $\mcl M^{-1} = \mcl V$. Hence invertibility of $\mcl P$ is now equivalent to the invertibility of  $\mcl Q$. Finally, we have \[
\mcl Q^{-1}=\fourpi{T}{0}{0}{R_i}^{-1}=\fourpi{T^{-1}}{0}{0}{\hat R_i}
\]
which completes the proof.
\end{proof}

Given the results in Lemma~\ref{lem:inverse_gohberg}, Cor.~\ref{cor:inverse_gohberg}, and Lemma~\ref{lem:inverse_b}, we now consider the numerical problem of computing the controller gains, $K_1$, $K_2$, that define
\[
\mcl K=\fourpi{K_1}{K_2}{\emptyset}{\emptyset}=\mcl Z \mcl P^{-1}
\]
where $\mcl Z$ and $\mcl P$ are obtained from Corollary~\ref{cor:stabilizing_control_lpi},~\ref{cor:optimal_control_lpi}, or~\ref{cor:optimal_control_LPI_new} and have the form
\[
\mcl Z=\fourpi{Z_1}{Z_2}{\emptyset}{\emptyset} \quad \text{and}\quad \mcl P:=\fourpi{P}{Q_1}{Q_2}{R_i}.
\]
Specifically, if $\mcl P^{-1}:=\fourpi{\hat P}{\hat Q_1}{\hat Q_2}{\hat R_i}$, then
\begin{align*}
K_1&=Z_1 \hat P +\int_a^b Z_2(s)\hat Q_2(s)ds\\
K_2(s)=&Z_1\hat Q_1(s)+Z_2(s)\hat R_0(s)\\
&+\int_a^sZ_2(\theta)\hat R_1(\theta,s)d\theta +\int_s^b Z_2(\theta)\hat R_2(\theta,s)d\theta.
\end{align*}
Of course, the parameters $\hat P,\hat Q_i,\hat R_i$ are obtained sequentially by first computing solutions $M_1,M_2$ to the Volterra-type integral equations (of $2^{nd}$ kind) in Lemma~\ref{lem:inverse_gohberg}, then applying the analytic formulae in Cor.~\ref{cor:inverse_gohberg} and Lemma~\ref{lem:inverse_b}. We note, however, that both $R_0^{-1}$ and the solutions $M_1,M_2$ will be non-polynomial and there are no closed-form analytic expressions for these parameters unless $R_1=R_2$. Fortunately, there exist convergent series expansions which can be used to approximate these terms arbitrarily well. For example,~\cite[Chapter IX.2]{gohberg} provides the following result. 

\begin{lem}\label{lem:volterra_properties}
Let $A:[a,b]\to \R^{n\times n}$ be Lebesgue integrable on $[a,b]$. Then, the series $I_n+\sum_{i=1}^{\infty} U_k(s)$, where $U_k = \int_a^s A(\theta)U_{k-1}(\theta)d\theta$ and $U_1(s) = \int_a^s A(\theta)d\theta$, converges uniformly on $s\in[a,b]$ to a unique function, $U:[a,b]\to \R^{n\times n}$, that solves	$U(s) = I_n +\int_a^s A(\theta)U(\theta)d\theta$.
Furthermore, for any $k\in \N$, \vspace{-2mm}
\begin{align*}
\norm{U_k(s)}\le \frac{1}{k!}\left(\int_a^b\norm{A(s)}ds\right)^k,\quad s\in[a,b].
\end{align*}
\end{lem}
As a practical matter, we typically only need to obtain values for the controller gains at discrete points in space, which also simplifies the problem of inversion of the matrix-valued function $R_0(s)$. A more detailed description of the computation of the inverse operator $\mcl P^{-1}$ as implemented in PIETOOLS is described in Appendix A of the supplemental document.

\section{Numerical Implementation and Verification}\label{sec:numerical}
In this section, we use the PIETOOLS software package to construct the PIE representation and solve the associated LPIs for analysis in Thms.~\ref{thm:stable_LPI}, \ref{thm:gain_lpi}, \ref{thm:gain_lpi_new}, and controller synthesis in Cor.~\ref{cor:optimal_control_lpi}. 
These tests are performed on several PDE models, including heat, wave, and beam equations. For cases of actuation at the boundary, an ODE input filter is added as discussed in Subsec.~\ref{subsec:note_boundary}. When analytic expressions for performance are available, comparison is made to these results.


In each case, the PI operators ($\mcl T,\mcl A$, et c.) which define the PIE representation of the given PDE are constructed using the PIETOOLS command line interface (See Chap. 4 of~\cite{manual}) which applies the conversion formulae in~\cite{shivakumar_representation_TAC}. The LPIs are constructed by declaration of operator-valued decision variables (See Chap. 5 of~\cite{manual}), algebraic manipulation of the operators (See Chap. 10 of~\cite{manual}), and enforcement of operator-valued inequalities (See Chap. 7 of~\cite{manual}). Operations for the construction of the feedback gains by operator inversion are described in Chaps. 7 and 10 of~\cite{manual}. This approach is used to either find a primal and dual lower bound on exponential decay rate (in Subsec.~\ref{subsec:num_exp}) or find an $\hinf$-optimal state-feedback controller (in Subsec.~\ref{subsec:num_control}). In the case of controller synthesis, the controller gains obtained from the LPI, $\mcl K$, are used to construct a closed-loop PDE for numerical simulation and verification of performance. Numerical simulation is performed using the PIESIM package described in~\cite{Peet_2024ACM} (See also Chap. 6 of~\cite{manual}). For reproducibility, similar demonstrations and numerical examples can be found in the PIETOOLS software package (See Chaps. 11 and 12 of~\cite{manual}). 


\subsection{Bounding Exponential Decay Rate (Thm.~\ref{thm:stable_LPI})}\label{subsec:num_exp}
We apply the primal and dual LPIs for the exponential decay rate in Thm.~\ref{thm:stable_LPI} to a linear delay-differential equation and a PDE reaction-diffusion equation to obtain the maximum lower bound on the exponential decay rate, $\alpha$. To maximize $\alpha$, we observe that the LPIs in Thm.~\ref{thm:stable_LPI} are convex in $\alpha$ for a fixed $\mcl P$ -- which implies a bisection search on $\alpha$ can be used to maximize the lower bound on exponential decay rate.

\begin{ex}[Exponential Stability of a Linear Time-Delay System]\label{ex:time-delay}
Consider the following autonomous linear delay-differential equation from, e.g.~\cite{mondie2005exponential}.
\begin{align*}
\dot{x}(t)= \bmat{-4&1\\0&-4}x(t)+\bmat{0.1&0\\4&0.1}x(t-0.5)
\end{align*}
Delay differential equations can be represented as a transport equation coupled to an ODE and formulae for conversion of a delay-differential equation to a PIE can be found in~\cite{peet_2021AUT}. For this example, both the primal and dual LPIs obtained maximum provable lower bounds on the exponential decay rate of $\alpha_p=\alpha_d=1.1534$. These are similar to the estimate of $\alpha=1.153$ as reported in~\cite{mondie2005exponential}.

\end{ex}

\begin{ex}[Exponential Stability of a Reaction-Diffusion PDE]\label{ex:reaction-diffusion}
Consider the following reaction-diffusion equation.
\begin{align*}
&\dot{\mbf v}(t,s) = 2 \mbf v(t,s) + \partial_s^2\mbf v(t,s), ~~ \mbf v(t,0)=\partial_s \mbf v (t,1)=0.
\end{align*}
The PIE representation of this PDE is defined by parameters
{\small
\begin{align*}
&\mcl T = \fourpi{\emptyset}{\emptyset}{\emptyset}{0,-\theta,-s}, \mcl A = \fourpi{\emptyset}{\emptyset}{\emptyset}{1,-\lambda\theta, -\lambda s}.
\end{align*}} Using Thm.~\ref{thm:stable_LPI}, the maximum provable primal and dual lower bounds on exponential decay rate are $\alpha_p=\alpha_d = 0.4674$. One can find an analytical solution to the above PDE (using the change of variable $\mbf y(t,s) = e^{-2t}\mbf x(t,s)$) and observe that the largest eigenvalue of this solution is $-0.4674$ -- demonstrating accuracy of the maximal lower bounds obtained from the LPIs.
\end{ex}

\subsection{Verifying Eq.~\eqref{eq:dual_io} through numerical simulation}\label{subsec:io_numerical}
The key relation between inputs and outputs of the primal and dual PIE obtained in Eq.~\eqref{eq:dual_io} 
 can be verified numerically. To perform this numerical verification, we simulate various primal and dual PIE systems and measure any error in Eq.~\eqref{eq:dual_io}. For each example, we use zero initial conditions, time interval $t\in [0,5]$, and arbitrarily chosen $L_2$-bounded disturbances $w(t) = \sin(5t)e^{-2t}$ (for the primal) and $\bar{w}(t) = (t-t^2)e^{-t}$ (for the dual). The simulated responses $z$ (primal) and $\bar{z}$ (dual) are then used to measure the error in Eq.~\eqref{eq:dual_io} where error is defined as
\[err(t)= \int_{0}^{t} \bar{z}(\theta)^Tw(t-\theta)d\theta - \int_{0}^{t}\bar{w}(\theta)^Tz(t-\theta)d\theta.\]
The verification is performed for three PIEs obtained from the following three illustrative PDE systems:
\begin{enumerate}
  \item[(E1)] $\dot{\mbf v}(t,s)= \partial_s^2 \mbf v(t,s)+w(t)$, $\mbf v(t,0)=\mbf v(t,1)=0$, $z(t)=\int_0^1 \mbf v(t,s) ds$.
  \item[(E2)] $\dot{\mbf v}(t,s)= -\partial_s \mbf v(t,s)+w(t)$, $\mbf v(t,0)=0$, $z(t)=\mbf v(t,1)$.
  \item[(E3)] $\dot{\mbf v}(t,s)= 3\mbf v(t,s)+\partial_s^2 \mbf v(t,s)+w(t)$, $\mbf v(t,0)=\partial_s \mbf v(t,1)=0$, $z(t)=\mbf v(t,1)$.
\end{enumerate}
For each PDE, the formulae in \cite[Block 4 and 5]{shivakumar_representation_TAC} are used to obtain the parameters $\{\mcl T, \mcl A, \mcl B, \mcl C, \mcl D\}$. The results are given in Table \ref{tab:IO_compare} and indicate almost no numerical error over the given time interval for all three examples.

\begin{table}[t]
\begin{center}
\begin{tabular}{|c|c|c|c|}  
\hline
Example&(E1)&(E2)&(E3) \\
\hline
$err(t=1.0)$&1.2e-07&-3.2e-06&2.1e-06\\\hline
$err(t=2.5)$&-4.7e-07&2.8e-05&-2.3e-05\\\hline
$err(t=5.0)$&-2.2e-07&1.6e-05&-2.3e-04\\\hline
\end{tabular}
\end{center}
\caption{The error in Eq.~\eqref{eq:dual_io} at 3 time instances for numerical simulation of the primal and dual PIEs obtained from Examples E1, E2, and E3; subject to zero initial conditions and disturbances $w=\sin(5t)\exp(-2t)$ and $\bar{w}=(t-t^2)\exp(-t)$; and where error is defined as $err(t)= \int_{0}^{t} \bar{z}(\theta)^Tw(t-\theta)d\theta - \int_{0}^{t}\bar{w}(\theta)^Tz(t-\theta)d\theta$.
}\label{tab:IO_compare}
\end{table}

Note that Example (E3) is unstable and hence its primal and dual PIE representation are likewise unstable. However, as mentioned in Remark~\ref{rem:intertwining}, the intertwining relationship in Eq.~\eqref{eq:dual_io} does not require stability -- an assertion verified by the numerical analysis in Table~\ref{tab:IO_compare}.

\subsection{Comparison of $L_2$-gain Bounds from Thm.~\ref{thm:gain_lpi} and Thm.~\ref{thm:gain_lpi_new}}\label{subsec:numeric_L2gain}
Next, we examine the practical impact on accuracy of the class of extended storage functions described in Subsections~\ref{subsec:alternativeLPI} and~\ref{subsec:alternativeLPI_control}. For this analysis, we compute the minimum $L_2$-gain bounds for several PDEs using both the primal and dual LPI conditions in Thm.~\ref{thm:gain_lpi} and the extended primal and dual LPI conditions in Thm.~\ref{thm:gain_lpi_new}. The PDEs included in this test are: heat eq. with Dirichlet BC's (A.1); wave equation (A.2); heat eq. with mixed BC's (A.3); coupled heat/transfer eq. (A.4); and coupled heat eqs. (A.5). 
\begin{align*}
    A.1\quad&\dot{\mbf v}(t,s) = \partial_s^2\mbf v(t,s) + w(t), \quad \mbf v(t,0)=\mbf v(t,1)=0,\\ 
    &z(t) = \int_0^1 \mbf v(t,s) ds.\\
    A.2\quad &\ddot{\mbf v}(t,s) = \partial_s^2\mbf v(t,s) + w(t), \quad \mbf v(t,0)=\mbf v(t,1)=0,\\ 
    &z(t) = \mbf v_s(t,1).\\
    A.3\quad &\dot{\mbf v}(t,s) = \partial_s^2\mbf v(t,s) + w(t), \; \mbf v(t,0)=\mbf v_s(t,1)=0,\\
    & z(t) = \int_0^1 \mbf v(t,s) ds.\\
    A.4\quad&\bmat{\dot{\mbf v}_1(t,s)\\\dot{\mbf v}_2(t,s)} = \partial_s \bmat{\mbf v_2(t,s)\\\mbf v_1(t,s) }+\bmat{0\\w(t)},\; \mbf v_2(t,0) =0,\\& \mbf v_1(t,1) + k\mbf v_2(t,1) = 0,\quad z(t) = \int_0^1 \mbf v_2(t,s) ds.\\
    A.5\quad &\dot{\mbf v}(t,s) = \bmat{1& 1.5\\ 5&0.2}\mbf v(t,s)+\frac{1}{2.6}\partial_s^2 \mbf v(t,s) + sw(t), \\
    &\mbf v(t,0)=\mbf v(t,1)=0, \quad z(t) = \int_0^1 \bmat{1&0}\mbf v(t,s) ds.
\end{align*}
The results are listed in Table~\ref{tab:one}. Accuracy of the $L_2$-gain bound from Thms.~\ref{thm:gain_lpi} and~\ref{thm:gain_lpi_new} can be inferred from the gap between primal (Thms.~\ref{thm:gain_lpi}a and~\ref{thm:gain_lpi_new}a) and dual LPIs (Thms.~\ref{thm:gain_lpi}b and~\ref{thm:gain_lpi_new}b). These results indicate that both the primal and dual LPIs from Thm.~\ref{thm:gain_lpi_new} are highly accurate. However, it seems that the restriction on the structure of the storage functional in the dual test of Thm.~\ref{thm:gain_lpi} results in conservatism in several cases. Furthermore, in Example A.2, the primal test of Thm.~\ref{thm:gain_lpi} is likewise conservative. These results indicate the importance of using the extended class of storage functionals described in Subsections~\ref{subsec:alternativeLPI} and~\ref{subsec:alternativeLPI_control}.
 
\begin{table}[!ht]
\begin{center}
\begin{tabular}{ |c|l|l||l|l| } \hline
\multicolumn{5}{|c|}{Bound on $\hinf$-norm} \\\hline
  & Thm.~\ref{thm:gain_lpi}a & Thm.~\ref{thm:gain_lpi}b& Thm.~\ref{thm:gain_lpi_new}a& Thm.~\ref{thm:gain_lpi_new}b\\ \hline
 Ex.~A.1&0.083&6.89&0.0833&0.0833\\ \hline
 Ex.~A.2&4.23&8.71&0.5& 0.5 \\ \hline
 Ex.~A.3&0.33&3.49&0.33&0.33\\ \hline
 Ex.~A.4&0.8418&0.8418&0.842&0.8418 \\ \hline
 Ex.~A.5& 0.81&3.501 &0.81&0.81\\ \hline
\end{tabular}
\caption{Minimum computed bounds on the $L_2$-gain for PDEs A.1-A.5 using the primal (a) and dual (b) LPIs in Theorems \ref{thm:gain_lpi} and \ref{thm:gain_lpi_new}. A gap between primal (a) and dual (b) bounds indicates conservatism in one of the computed bounds. The absence of any gaps between 12a and 12b suggests that the bounds on $H_\infty$ norm computed using Thm.~\ref{thm:gain_lpi_new} are not conservative.
}
\label{tab:one}
\end{center}
\end{table}

\subsection{$\hinf$-Optimal State-Feedback Control using Cor.~\ref{cor:optimal_control_lpi}}\label{subsec:num_control}
We conclude this section by synthesizing optimal state-feedback controllers for three canonical examples of PDE control: 1) the Euler-Bernoulli beam equation with in-domain actuation; 2) a reaction-diffusion PDE with actuation at the boundary; and 3) the wave equation with actuation at the boundary. In each case, motivated by the numerical tests in Subsec.~\ref{subsec:numeric_L2gain}, we use the extended class of storage functions for which LPI conditions are given in Cor.~\ref{cor:optimal_control_lpi}.
Table~\ref{tab:acc_compare} summarizes the results for all three examples, providing the minimum achievable closed-loop $L_2$-gain ($H_\infty$-norm) and computation time as function of the degree ($n=1,2,3,4$) of the monomial bases as defined in Eq.~\eqref{eq:monomial_bases}. Computation time is defined as time required to set up the LPI, solve the LPI, and reconstruct the controller gains using a desktop computer with Intel Core \textit{i7-5960X} CPU and 64GB DDR4 RAM. 

For each example, we specify the corresponding PIE state and use PIETOOLS to obtain the associated PIE representation. The optimal controller gains (as calculated for order $n=3$) are obtained using the procedure defined in Sec.~\ref{sec:controller}. For Examples \ref{ex:hinf_diffusion} and \ref{ex:hinf_beam}, numerical simulation of the closed-loop response is obtained using PIESIM~\cite{Peet_2024ACM} to verify that the closed-loop $L_2$-gain bound is satisfied. 

%

\begin{ex}[Euler-Bernoulli beam equation]\label{ex:hinf_beam}
Recall that in Example~\ref{ex:EB-representation}, we formulated the problem of optimal control of an Euler-Bernoulli (EB) beam model as using the PIE state $\mbf x=\partial_s^2\mbf v$.
Solving the LPI in Cor.~\ref{cor:optimal_control_lpi}, we find the $H_\infty$-optimized state-feedback controller to be 
{
\begin{align*}
&u(t)=\int_0^1\bmat{Q_{a}(s)&Q_{b}(s)}\partial_s^2\mbf v(s) ds,\\
&Q_{a}(s) \hspace{-0.5mm}=\hspace{-0.5mm} 0.23s^5 - 1.37s^4 + 1.9s^3 - 2.1s^2 + 2.03s - 0.87,\notag\\
&Q_{b} (s) \hspace{-0.5mm}=\hspace{-0.5mm} 
0.02s^5 - 0.04s^4 + 0.06s^3 - 0.11s^2 - 0.01s + .0006.\notag
\end{align*}
}
The upper bound on the $\hinf$-norm of the corresponding closed-loop PDE obtained from the LPI in Cor.~\ref{cor:optimal_control_lpi} is $0.73$. In Figures \ref{fig:optimal_control_ex22}a, \ref{fig:optimal_control_ex22}b and \ref{fig:optimal_control_ex2_out}, we plot the system response for a disturbance $w(t) = \sin(3t)e^{-t}$ with the zero initial conditions. The $L_2$-gain for this disturbance is $0.099$ which is less than the predicted worst-case bound of $.73$. 

\end{ex}

\begin{figure}[t]
\centering
\begin{subfigure}[b]{0.24\textwidth}
\centering
\includegraphics[width=\textwidth]{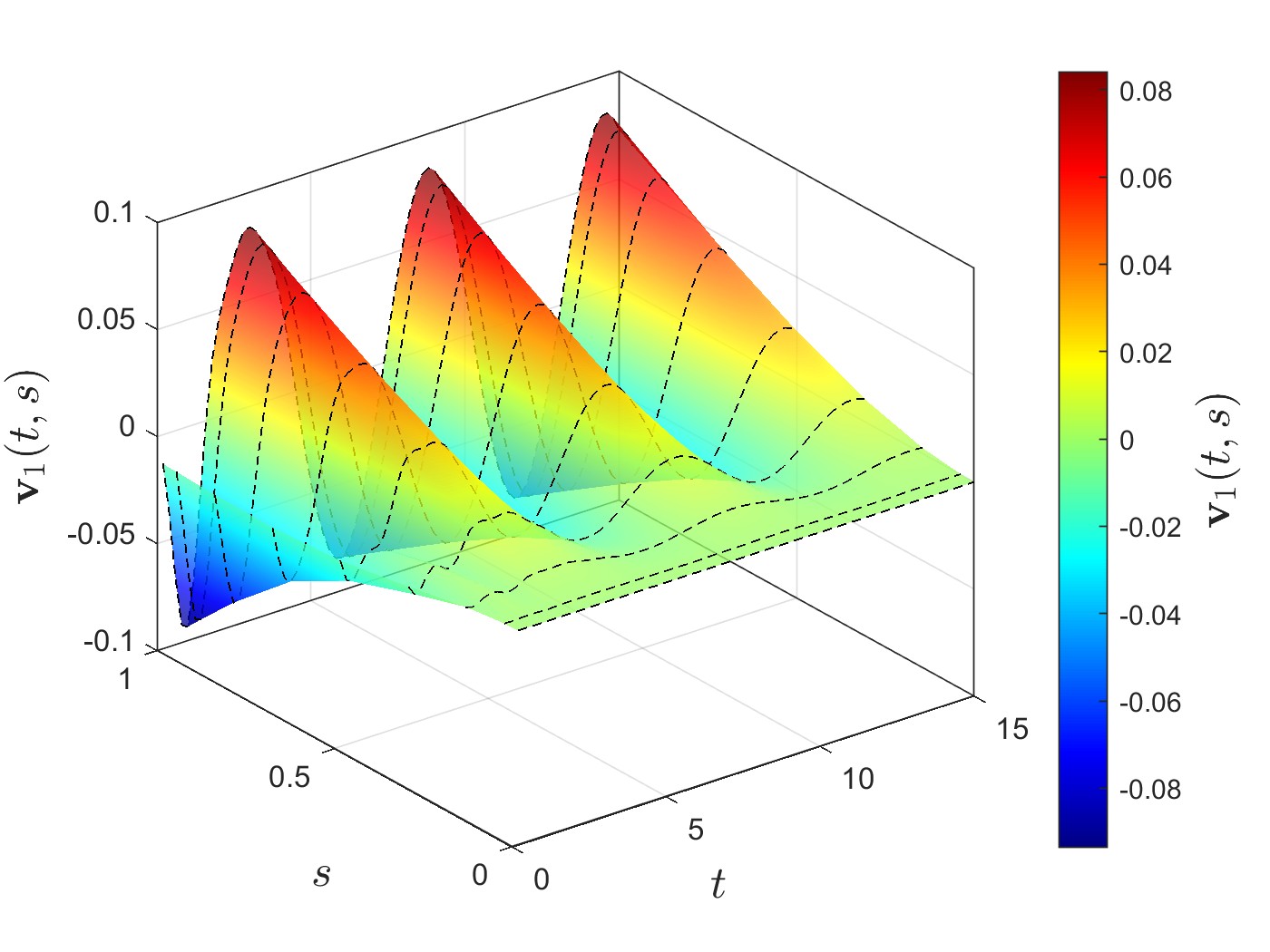}
\caption{Open loop response}\label{fig:optimal_control_ex2}
\end{subfigure}
\begin{subfigure}[b]{0.24\textwidth}
\centering
\includegraphics[width=\textwidth]{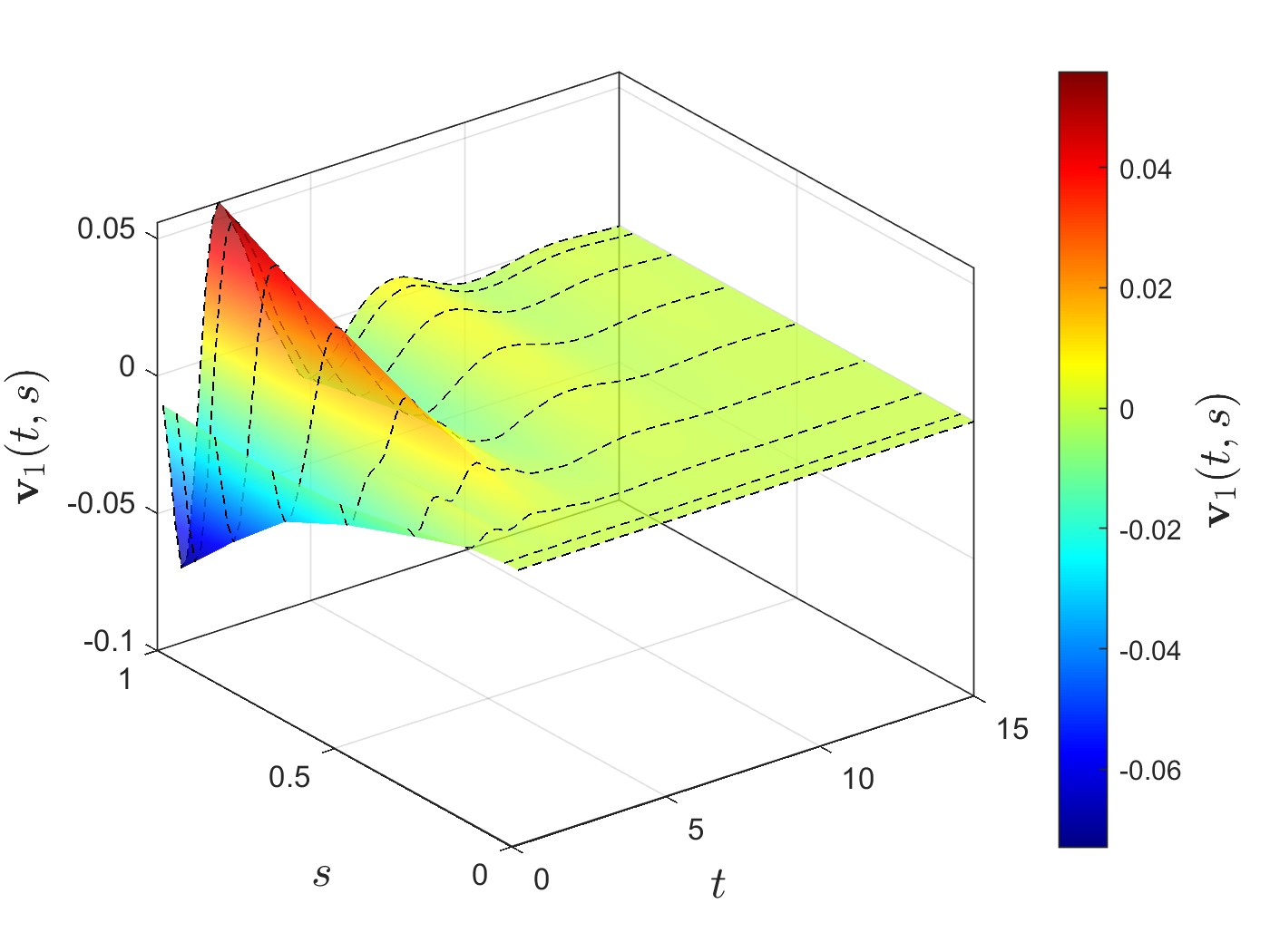}
\caption{Closed loop response}\label{fig:optimal_control_ex2b}
\end{subfigure}
\caption{
Numerical simulation of open loop (a) and closed-loop (b) response of $\mbf v_1(t,s)$ for in-domain control of an Euler-Bernoulli beam equation (Ex.~\ref{ex:hinf_beam}) with disturbance $w(t) = \sin(3t)e^{-t}$. As seen in (a), when $u=0$, the system oscillates indefinitely. With feedback control, however, the state demonstrates exponential decay, as seen in (b).
%
}\label{fig:optimal_control_ex22}
\end{figure}

\begin{figure}[t]
\centering
\includegraphics[width=\linewidth]{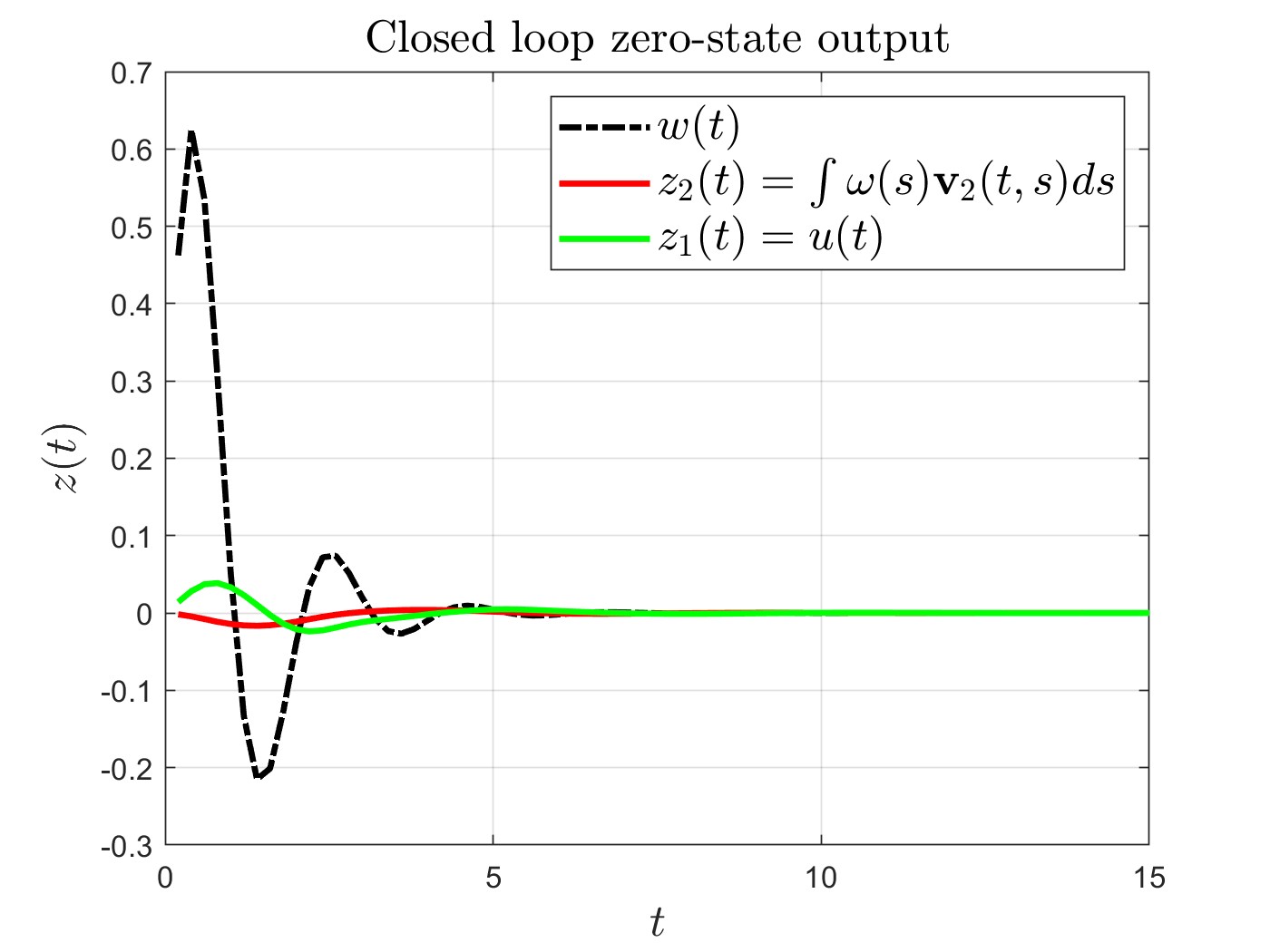}
\captionof{figure}{
Closed-loop numerical simulation of regulated outputs $z_1=x(t)$ and $z_2(t)=\int_0^1\omega(s)\mbf v_2(t,s) ds$ with disturbance $w(t) = \sin(3t)e^{-t}$ for Ex.~\ref{ex:hinf_beam} where $\omega(s):=(1-s)^2/2$. }\label{fig:optimal_control_ex2_out}
\end{figure}

\begin{ex}[Reaction-Diffusion Equation] \label{ex:hinf_diffusion}
Consider boundary control of an unstable reaction-diffusion equation:
{
\begin{align*}
&\dot{\mbf v}(t,s) = 5\mbf v(t,s) +\partial_s^2\mbf v(t,s)+w(t),~\dot{x}(t) =u(t), \\
&z(t) = \bmat{x(t)\\\int_0^1 \mbf v(t,s)ds},~\mbf v(t,0) =0,~\partial_s\mbf v(t,1) = x(t),
\end{align*}}
The corresponding PIE has state $\mbf x(t)=\partial_s^2\mbf v(t)$ and parameters
{\small
\begin{align*}
&\mcl T = \fourpi{1}{0}{s}{0,-\theta,-s}, \mcl A=I + 5\mcl T, \mcl B_1 = \fourpi{0}{\emptyset}{1}{\emptyset}, \\
&\mcl B_2 = \fourpi{1}{\emptyset}{0}{\emptyset}, \mcl C= \fourpi{\bmat{1\\0.5}}{\bmat{0\\0.5s^2-s}}{\emptyset}{\emptyset}.
\end{align*}}
The $H_\infty$-optimized state-feedback controller is
\begin{align*}
&u(t)=-6.71x(t)+10^3 \int_0^1 K(s)\partial_s^2\mbf v(s) ds,\\
&K(s) = -11.68s^8 + 44.23s^7 - 65.93s^6 + 49.38s^5- 19.82s^4\\
&\qquad\qquad + 4.27s^3 - 0.46s^2 + 0.02s - 0.0002.
\end{align*}
The upper bound on the $\hinf$-norm of the corresponding closed-loop PDE obtained from the LPI in Cor.~\ref{cor:optimal_control_lpi} is $4.59$. The simulated $L_2$-gain under disturbance $w(t) = \sin(5t)e^{-t}$ is $1.8905$ which verifies the bound.
In Figures \ref{fig:optimal_control_ex12}a, \ref{fig:optimal_control_ex12}b and \ref{fig:optimal_control_ex1_out}, we plot the system response for a disturbance $w(t) = \sin(5t)e^{-t}$.
\end{ex}

\begin{figure}[t]
\centering
\begin{subfigure}[b]{0.24\textwidth}
\includegraphics[width=\textwidth]{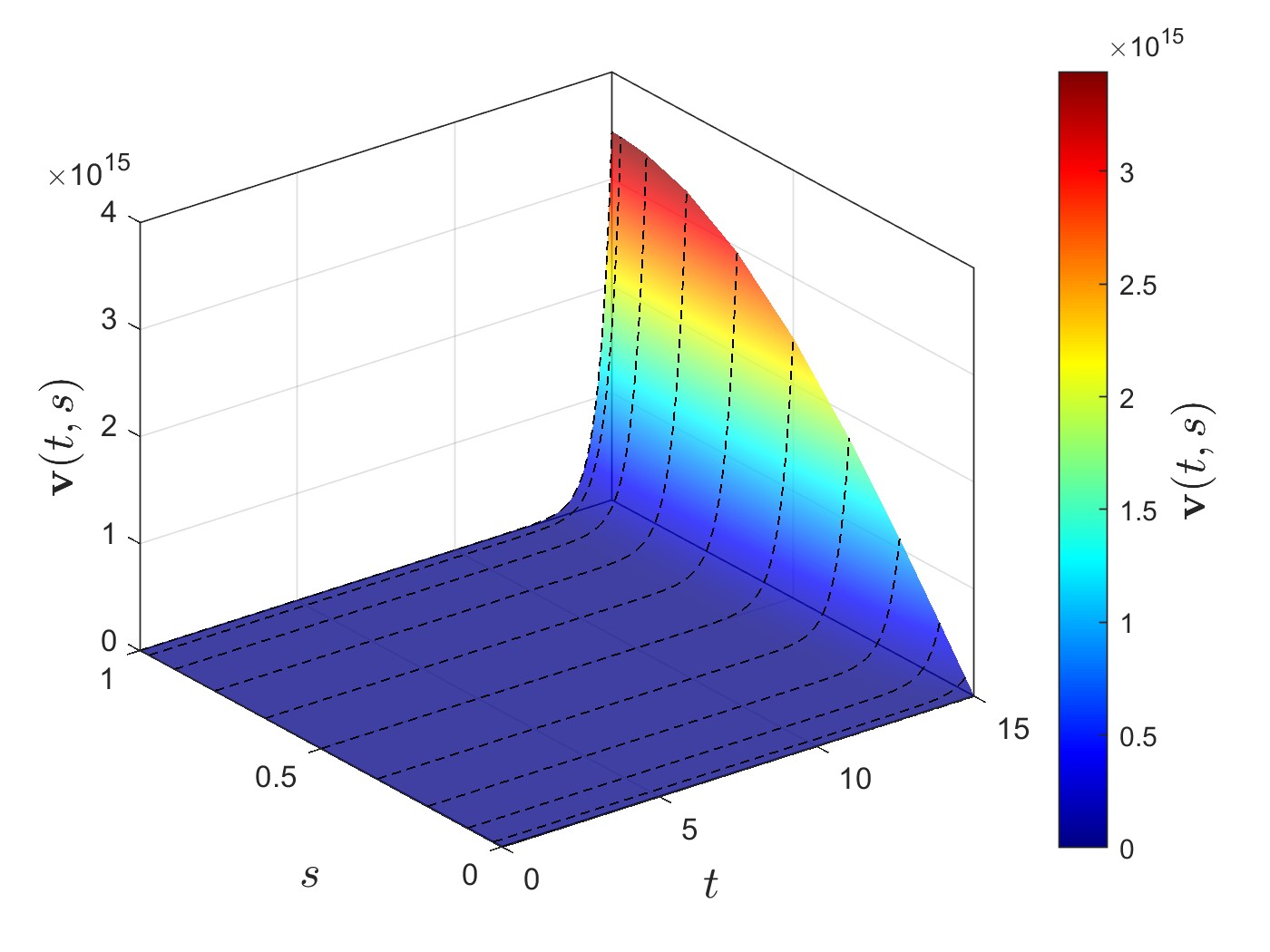}
\caption{Open loop response}\label{fig:optimal_control_ex1}
\end{subfigure}
\begin{subfigure}[b]{0.24\textwidth}
\includegraphics[width=\textwidth]{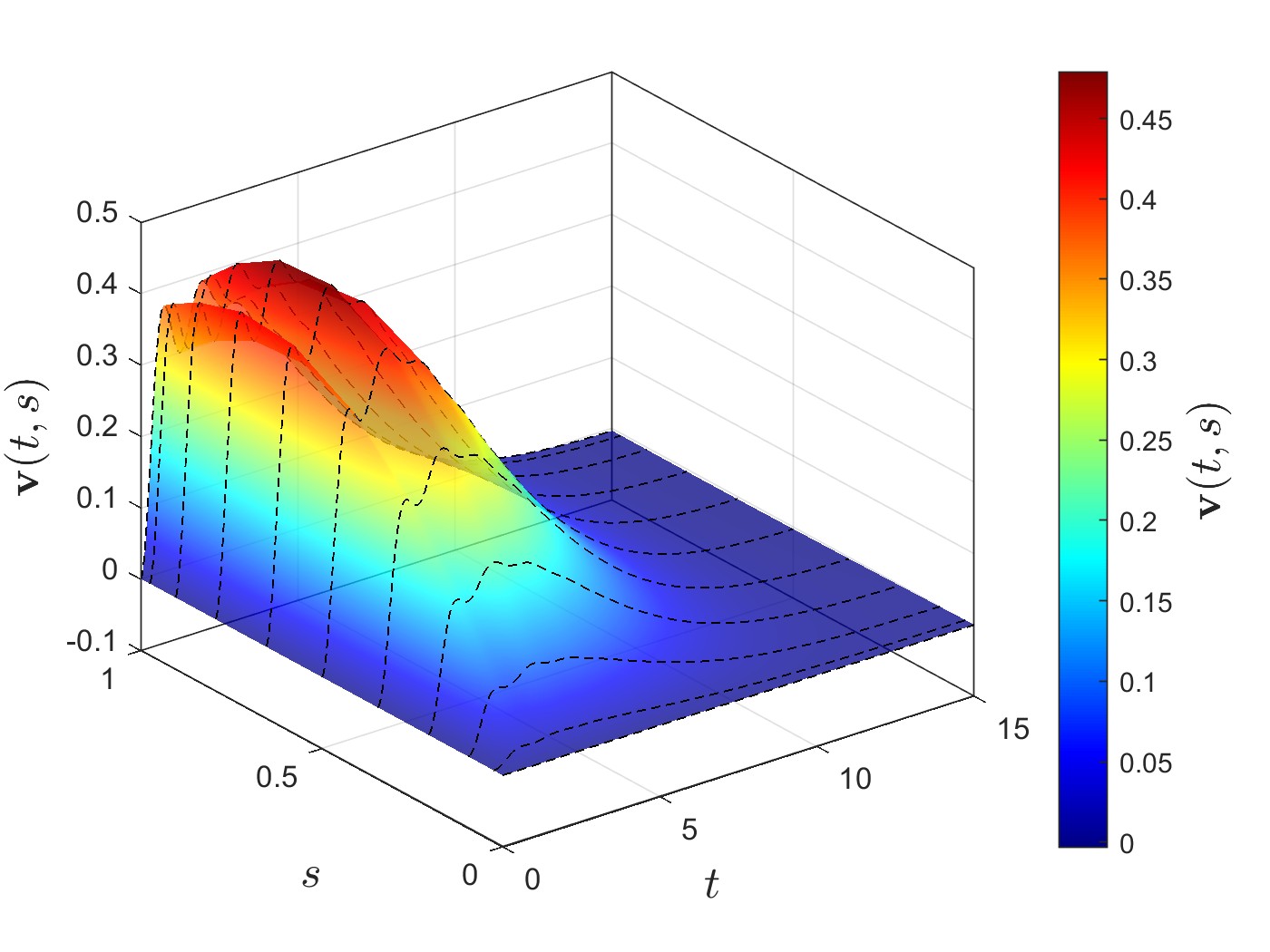}
\caption{Closed loop response}\label{fig:optimal_control_ex1b}
\end{subfigure}
\caption{
Numerical simulation of open loop (a) and closed-loop response (b) for boundary control of the reaction-diffusion equation (Ex.~\ref{ex:hinf_diffusion}) with disturbance $w(t) = \sin(5t)e^{-t}$. As expected, the uncontrolled system is unstable as seen in (a). However, with feedback control, the closed-loop system is stable as seen in (b).
}\label{fig:optimal_control_ex12}
\end{figure}

\begin{figure}[t]
\centering
\includegraphics[width=\linewidth]{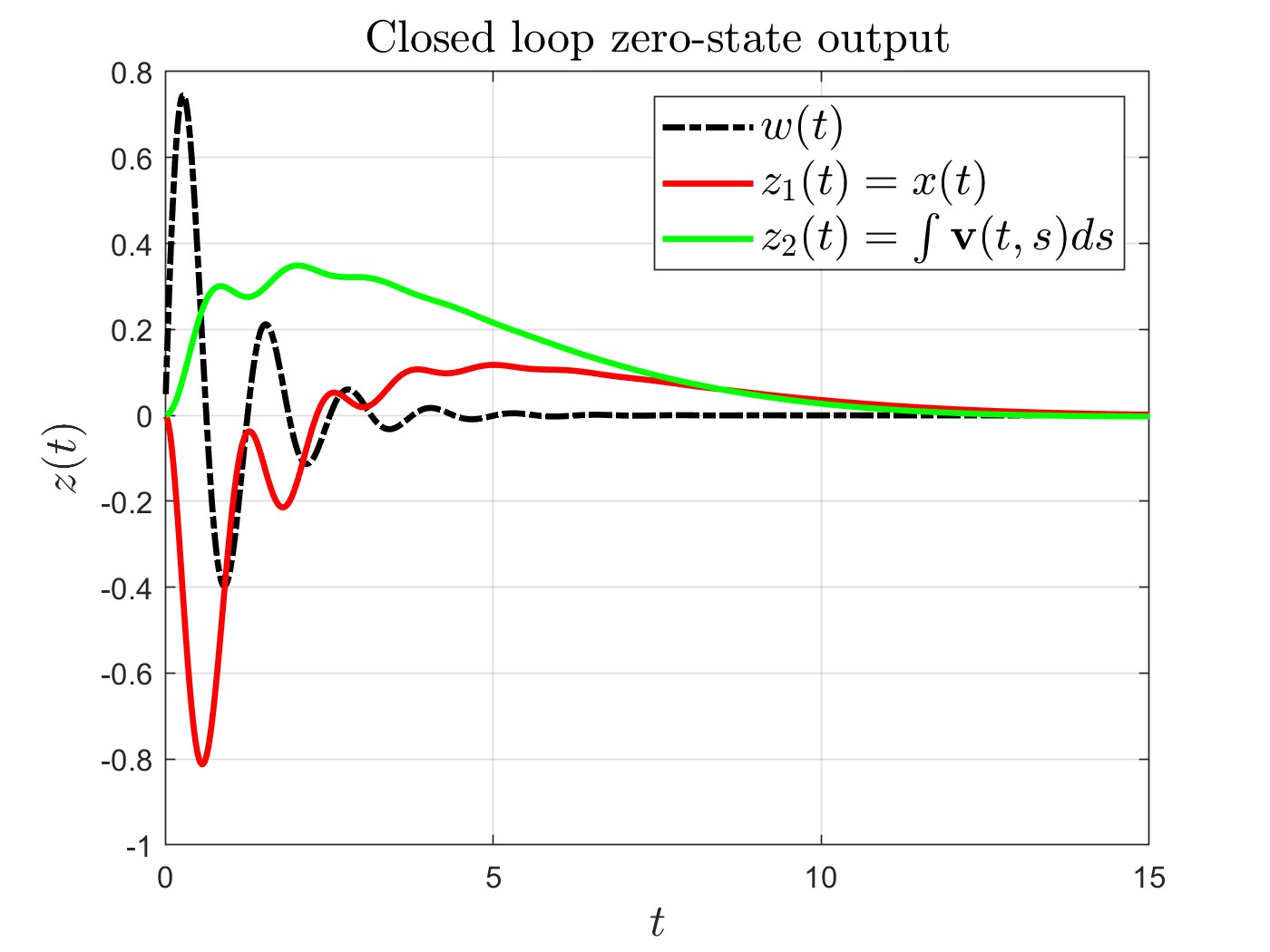}
\captionof{figure}{Closed-loop numerical simulation of regulated outputs $z_1(t)=x(t)$ and $z_2(t)=\int_0^1 \mbf v(t,s)ds$ with disturbance $w(t) = \sin(5t)e^{-t}$ for Ex. \ref{ex:hinf_diffusion}.}\label{fig:optimal_control_ex1_out}
\end{figure}

\begin{ex}[Wave equation]\label{ex:hinf_wave}
Consider boundary control of a wave equation:
\begin{align*}
&\ddot{\mbf \eta}(t,s) = \partial_s^2\mbf \eta(t,s)+w(t),~\dot{x}(t) = u(t)\notag\\
&z(t) = \bmat{x(t)\\\int_0^1 \mbf \eta(t,s) ds},~\mbf \eta(t,0) = 0, ~ \partial_s \mbf \eta(t,1) =x(t).
\end{align*}
To eliminate the second-order time derivative, $\ddot \eta$, we define $\mbf{v} = \bmat{\mbf \eta & \dot{\mbf \eta}}^T$ to obtain
\begin{align*}
&\dot{\mbf{v}}(t,s) \hspace{-1mm}=\hspace{-1mm} \bmat{0&\hspace{-2mm}1\\0&\hspace{-2mm}0}\mbf{v}(t,s)\hspace{-1mm}+\hspace{-1mm}\bmat{0&\hspace{-2mm}0\\1&\hspace{-2mm}0}\partial_s^2\mbf{v}(t,s)\hspace{-1mm}+\hspace{-1mm}\bmat{0\\1}w(t),~\dot{x}(t)\hspace{-1mm}=\hspace{-1mm}u(t), \\
&z(t) \hspace{-1mm}=\hspace{-1mm} \bmat{v(t)\\\int_0^1 \bmat{1&\hspace{-2mm}0}\mbf v(t,s) ds},~\bmat{1&\hspace{-2mm}0&\hspace{-2mm}0&\hspace{-2mm}0\\0&\hspace{-2mm}0&\hspace{-2mm}1&\hspace{-2mm}0}\bmat{\mbf{v}(t,0)\\\mbf{v}(t,1)} \hspace{-1mm}=\hspace{-1mm}\bmat{0\\1}x(t).
\end{align*}
We convert to a PIE which has state 
\[
\mbf x(t)=\bmat{x(t)&\partial^2_s \mbf v_{1}(t)&\mbf v_2(t)}^T=\bmat{x(t)&\partial_s^2\eta(t)&\dot \eta (t)}^T
\]
The upper bound on the $\hinf$-norm of the corresponding closed-loop PDE obtained from the LPI in Cor.~\ref{cor:optimal_control_lpi} is $.64$. 
The $H_\infty$-optimized state-feedback controller is
\[
u(t) = -0.17x(t)+10^{-2}\int_0^1 Q_{1}(s) \partial_s^2\eta(t,s)+
Q_{2}(s)\dot \eta(t,s)ds
\]
where\vspace{-2mm}

{\small
\begin{align*}
&{ Q_{1}(s) = .5 s^8 - 2s^7 + 3 s^6 - 2s^5 - 30s^4 + 60s^3 - 70s^2 + 20s - .8,}\notag\\
&Q_{2} (s) = .2 s^8 - .7 s^7  + .6 s^5 - 5s^4 - 20s^3 + 80s^2 - 2s - 40.\notag
\end{align*}}
\end{ex}

\begin{table}
\begin{center}
\begin{tabular}{|c|l|l|l|l|}\hline
\multicolumn{5}{|c|}{Closed-loop $\hinf$-norm vs. monomial degree} \\
\hline
Degree, n &1&2&3&4 \\\hline
Ex.~\ref{ex:hinf_beam}&3.29 & 0.89 &0.73 &0.66 \\\hline
Ex.~\ref{ex:hinf_diffusion}&7.86 &5.11 &4.59 &4.25 \\\hline
Ex.~\ref{ex:hinf_wave}&0.65 &0.64 &0.64 &0.639 \\\hline
\end{tabular}
\end{center}
\caption{Achieved bound on $\hinf$-norm for the closed-loop PDE with state-feedback for Examples~\ref{ex:hinf_beam},~\ref{ex:hinf_diffusion}, and~\ref{ex:hinf_wave} in Sec.~\ref{subsec:num_control}. Closed-loop norm is the optimal value of $\gamma$ in Cor.~\ref{cor:optimal_control_lpi} wherein variables $\mcl P$ and $\mcl Z$ are parameterized for degree $n$ as $\mcl P = \mcl Z_n^*Q_p\mcl Z_n$, $\mcl Z = Q_z\mcl Z_n$, and $Q_p\ge 0$, $Q_z$ are matrices and where $\mcl Z_n$ is defined in Eq.~\eqref{eq:monomial_bases}.}\label{tab:acc_compare}
\end{table}

\begin{table}
\begin{center}
\begin{tabular}{|c|l|l|l|l|}\hline
\multicolumn{5}{|c|}{Computation time vs. monomial degree} \\
\hline
Degree, n &1&2&3&4 \\\hline
Ex.~\ref{ex:hinf_beam}&6.2&15&30.5 &67.4\\\hline
Ex.~\ref{ex:hinf_diffusion}&4.6&5.5&9.5&13.9\\\hline
Ex.~\ref{ex:hinf_wave}&10.9&27.1&49.5&87.5\\\hline
\end{tabular}
\end{center}
\caption{Required computation time to find state-feedback controllers for Examples~\ref{ex:hinf_beam},~\ref{ex:hinf_diffusion}, and~\ref{ex:hinf_wave} in Sec.~\ref{subsec:num_control}. As in Table \ref{tab:acc_compare}, the norm is found by minimizing $\gamma$ in Cor.~\ref{cor:optimal_control_lpi}. The values correspond to the total CPU runtime, in seconds, for solving the $\hinf$-optimal state-feedback problem --- i.e., time for setting up the LPIs, solving the LPIs, and controller reconstruction.}\label{tab:acc_compare_2}
\end{table}

From the estimates in Table~\ref{tab:acc_compare}, we conclude that increasing the monomial degree ($n$) may (e.g. for diffusion PDEs) or may not (e.g. for transport PDEs) be needed to achieve near-optimal performance of the closed-loop system. The computational costs associated with increasing the degree are evaluated in  Tab. \ref{tab:acc_compare_2}.

\begin{ex}[Heat Equation]\label{ex:heat}
    For the heat equation as formulated in this example, there exists an analytic construction of the $H_\infty$-optimal controller. For this reason, we use this simple example as a benchmark, demonstrating convergence of the synthesized closed loop $H_\infty$-gains obtained from Corollary \ref{cor:optimal_control_lpi} to this analytic limit. Specifically, the PDE is defined as
    \begin{align*}
        \dot{\mbf x}(t,s) &= \partial_s^2 \mbf x(t,s) +w(t) +u(t)\\
        z(t) &= \bmat{u(t)\\\int_0^1 \mbf x(t,s) ds},\quad \mbf x(t,0)=\mbf x(t,1) = 0.
    \end{align*}
In \cite{bergeling2020closed}, it was shown that for this formulation, $ u(t) = \int_0^1 \frac{1}{2}s(s-1)\mbf x(t,s) ds$ provides an $\hinf$-optimal state-feedback controller with the associated $\hinf$-norm of $0.0164$ for the closed-loop system. For comparison, the closed-loop gain obtained from Cor. \ref{cor:optimal_control_lpi} with degree $n=4$ is $0.0166$.
\end{ex}
\section{Conclusions}
Recent work has shown that a large class of linear PDE systems admit an equivalent state-space representation using Partial Integral Equations (PIEs). However, relatively little is known about the system properties of such PIEs. In this paper, we have presented a series of duality results which establish equivalence in the input-output properties of a PIE and its dual. Specifically, for any given PIE, we have shown how to construct a dual PIE with simple formulae for obtaining the system parameters of this dual. Then, by establishing an intertwining property between the solutions of the dual and primal PIE, we have shown that the primal and dual PIEs have at least three equivalent properties: asymptotic stability, exponential decay rate, and $L_2$-gain. 

A benefit of the PIE representation of PDE systems is the ability to generalize Linear Matrix Inequality (LMI) conditions to Linear PI Inequality (LPI) conditions which may then be solved using convex optimization. Taking this approach, we show that the LPIs for stability, exponential decay rate, and $L_2$-gain admit equivalent primal and dual formulations. The dual formulations of the stability and $L_2$-gain LPIs are then used to solve the problem of stabilizing and $\hinf$-optimal state-feedback controller synthesis. A numerical algorithm is then proposed for reconstructing controller gains and implementation of the controller. Finally, numerical testing is used to verify the theorems and obtain controllers with provable $\hinf$-norm bounds. The numerical results show no apparent sub-optimality in the resulting controllers or $\hinf$ bounds.

\blue{Finally, we note that while the duality results and synthesis conditions presented here do not encompass performance metrics such as state-to-output gain, such extensions may be possible using the proposed methodology.}

\bibliographystyle{ieeetr}
\bibliography{optimal_control,peet_bib}
\vspace{-10 mm}	
\begin{IEEEbiography}[\vspace*{-6mm}{\includegraphics[width=1in,height=1.15in,clip,keepaspectratio]{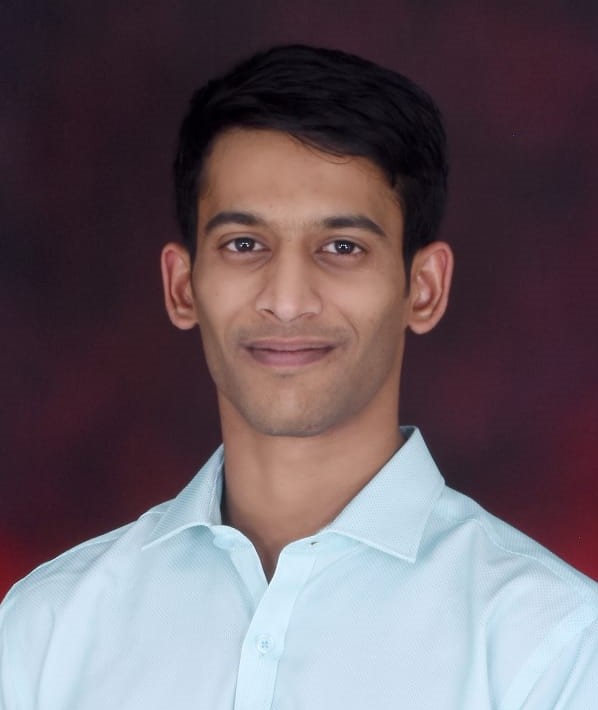}}]{Sachin Shivakumar} received his B.Tech. (2015) in Mechanical Engineering from the Indian Institute of Technology - Kharagpur, and both M.S. and Ph. D. in Mechanical Engineering from Arizona State University. In 2024, he was a part of Department of Aerospace Engineering of Iowa State University working as a Postdoctoral Researcher. Currently, he is a Postdoctoral Researcher at Applied Math \& Plasma Physics Division of Los Alamos National Laboratory.
\end{IEEEbiography}
\vspace{-10 mm}
\begin{IEEEbiography}[\vspace*{-6mm}{\includegraphics[width=1in,height=1.15in,clip,keepaspectratio]{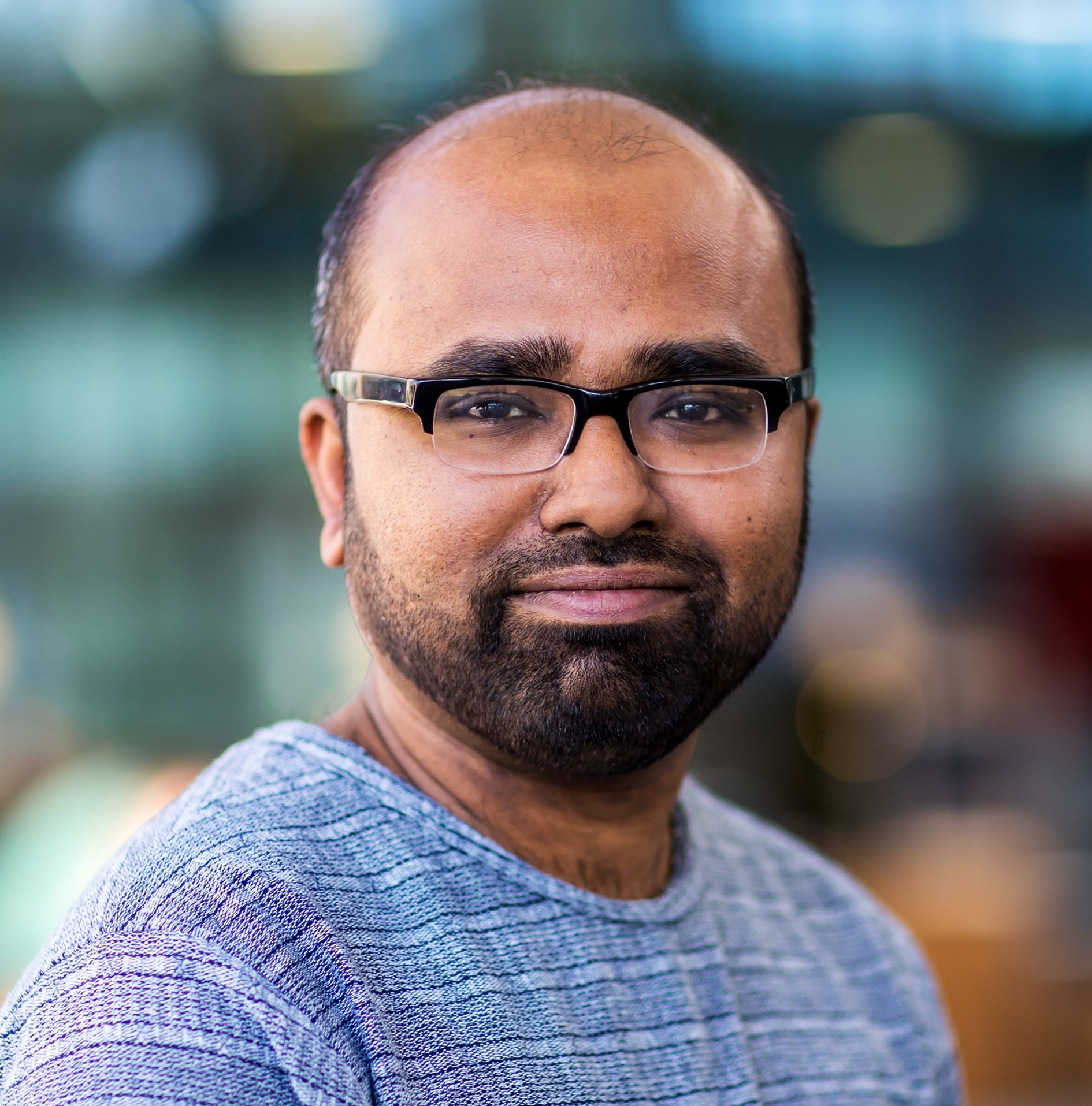}}]{Amritam Das} completed his M.S. and Ph.D. at Eindhoven University of Technology in 2020. He was a research associate at the University of Cambridge, affiliated with Sidney Sussex college, and a post-doctoral scholar at KTH Royal Institute of Technology. His is currently an Assistant Professor in the Control Systems group in the Department of Electrical Engineering of TU Eindhoven.

\end{IEEEbiography}	
\vspace{-10 mm}
\begin{IEEEbiography}[\vspace*{-6mm}{\includegraphics[width=1in,height=1.15in,clip,keepaspectratio]{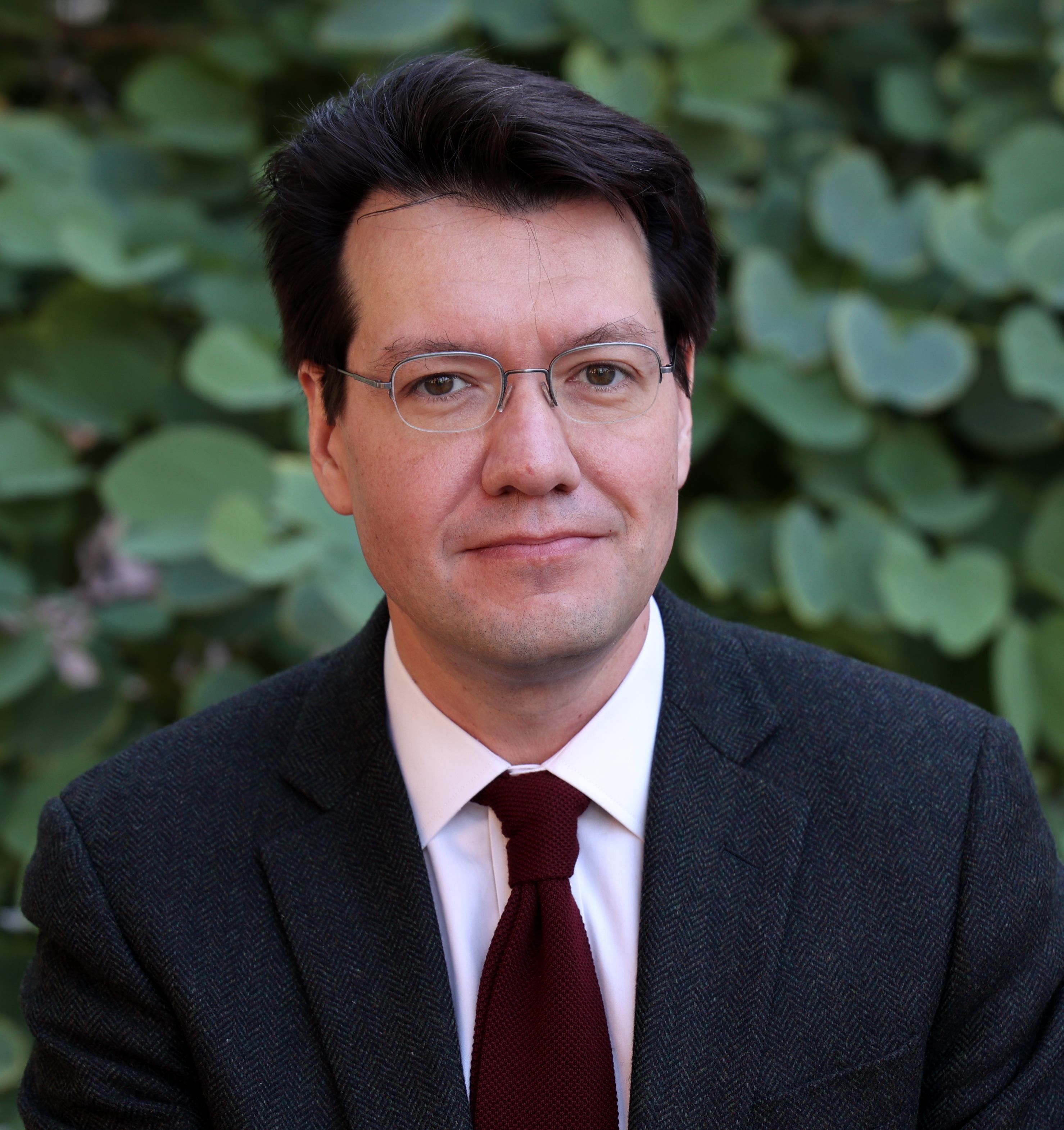}}]{Matthew Peet}
received M.S. and Ph.D. degrees in aeronautics and astronautics
from Stanford University (2000-2006). He was a postdoc at INRIA (2006-2008) and Asst. Professor at the Illinois Institute of Technology (2008-2012). Currently, he
is an Associate Professor of Aerospace Engineering
at Arizona State University.
\end{IEEEbiography}
\vfill

\if 0 

\begin{appendix}[A]\label{app:numerics}

\noindent \textbf{Implementation of operator inversion in PIETOOLS:} The inverse $\mcl P^{-1}$ in Lemma~\ref{lem:inverse_gohberg} (and Corollary~\ref{cor:inverse_gohberg}) is defined in terms of matrix-valued functions $U$, $V$ which satisfy a set of Volterra-type integral equations of the $2^{nd}$ kind. Our approach to constructing $U,V$ is based on the ``method of successive approximation'' class of algorithms, the convergence of which has been established in Lemma~\ref{lem:volterra_properties}. 
Specifically, we define Algorithm~\ref{alg:inverse} (wherein $V$ is found by solving for its transpose) for computing the solution of $U$ and $V$ at discrete points and then fitting these values to polynomial approximations. 
\begin{breakablealgorithm}\label{alg:inverse}
\caption{Approximating the inverse of $\mcl R\mbf x(s) = \hspace{-0.5mm}\mbf x(s)-\hspace{-0.5mm}\int\limits_a^s F_1(s)G_1(\theta)\mbf x(\theta)d\theta-\hspace{-0.5mm}\int\limits_s^b F_2(s)G_2(\theta)\mbf x(\theta)d\theta$} at \label{alg:inv}
\begin{algorithmic}[1]
\State Given: $n$, $\epsilon$, $[a,b]$, $F_i$, $G_i$. Set: $U_0 = V_0= I, N=0$.
\For{$i\in\{0,\cdots,n\}$} $s_i=a+\frac{i(b-a)}{n}$
\State {\small $C(s_i) = \bmat{F_1(s_i)~F_2(s_i)}$}
\State {\small $B(s_i) = \bmat{G_1(s_i)\\-G_2(s_i)}$,\; $A(s_i) = B(s_i)C(s_i)$}
\EndFor
\While{$\left(\sum_{i=1}^{i=n}\norm{A(s_i)}\right)^k\ge \epsilon\cdot k!$} $N=N+1$
\For{$i\in\{1,\cdots,n\}$}
\State {\small $U_{k+1}(s_i) =\frac{(b-a)}{2n}\sum\limits_{j=1}^{j=i} \bmat{A(s_j)~A(s_{j-1})}\bmat{U_k(s_j)\\U_k(s_{j-1})} $}
\State {\small $V_{k+1}(s_i) =\frac{(a-b)}{2n}\sum\limits_{j=1}^{j=i} \bmat{V_k(s_j)~V_k(s_{j-1})}\bmat{A(s_j)\\A(s_{j-1})}$}
\EndFor
\EndWhile
\For{$i\in\{0,\cdots,n\}$}
\State $U(s_i) = \sum_{i=0}^N U_k(s_i)$~~~ $V(s_i) = \sum_{i=0}^N V_k(s_i)$
\State $U(s_i)= C(s_i)U(s_i)$~~~ $V(s_i)=V(s_i)B(s_i)$.
\EndFor\vspace{1mm}
\State $\bmat{U_{11}&U_{12}\\U_{21}&U_{22}}= U(b)$~~~ $P = \bmat{0&0\\U_{22}^{-1}U_{21}&I}$\vspace{2mm}
\State Solve the problem
\begin{align*}
\min_{\alpha, \beta\in \R^{d+1}}~& \sum_{i=0}^{n} \norm{U_p(s_i)-U(s_i)}^2_2+\norm{V_p(s_i)-V(s_i)}^2_2\\
s.t. \quad&U_p(s) = \alpha\text{col}(1,s,\cdots,s^d),\\
&V_p(s) = \beta\text{col}(1,s,\cdots,s^d).
\end{align*}
\State $M_1(s,t) = U_p(s)(I-P)V_p(t)$~~~$M_2(s,t) = -U_p(s)PV_p(t)$
\Return $M_1$, $M_2$
\end{algorithmic}
\end{breakablealgorithm}

Algorithm~\ref{alg:inverse} finds a $\mbf \Pi_2$ approximation of the inverse $\threepi{I,M_1,M_2}:=\threepi{I,H_1,H_2}^{-1}$. The values of the parameters $M_1,M_2$ are obtained at discrete points $s_i\in[a,b]$ and polynomial approximations of these parameters are found using regression. Given $M_1,M_2$, Corollary~\ref{cor:inverse_gohberg} and Lemma~\ref{lem:inverse_b} are then used to construct the inverse of the full PI operator $\fourpi{P}{Q_1}{Q_2}{R_i}$. 
\end{appendix}
\fi

\end{document}